\documentclass[11pt]{amsart} 
\usepackage{fullpage, amsmath, amssymb, young}
\usepackage[all]{xypic}

\title{A new linear quotient of $\bC^4$ admitting a symplectic resolution} 
\usepackage{url} \usepackage{graphicx}

\numberwithin{equation}{subsection}

\setcounter{tocdepth}{1}

\theoremstyle{definition}
\newtheorem{theorem}[equation]{Theorem}
\newtheorem{lemma}[equation]{Lemma}
\newtheorem{proposition}[equation]{Proposition}
\newtheorem{corollary}[equation]{Corollary}

\newtheorem{remark}[equation]{Remark}

\newcommand{\Span}{\operatorname{Span}}
\newcommand{\Id}{\operatorname{Id}}
\newcommand{\rk}{\operatorname{rk}}
\newcommand{\Ad}{\operatorname{Ad}}

\newcommand{\Hilb}{\operatorname{Hilb}}

\newcommand{\HP}{\mathsf{HP}}
\newcommand{\bR}{\mathbf{R}}

\newcommand{\bZ}{\mathbf{Z}}

\newcommand{\iso}{{\;\stackrel{_\sim}{\to}\;}}

\newcommand{\cS}{\mathcal{S}}

\newcommand{\Aut}{\operatorname{Aut}}
\newcommand{\Out}{\operatorname{Out}}
\newcommand{\End}{\operatorname{End}}
\newcommand{\tr}{\operatorname{tr}}

\newcommand{\Ind}{\operatorname{Ind}}
\newcommand{\Rep}{\operatorname{\mathsf{Rep}}}

\newcommand{\GL}{\mathsf{GL}}
\newcommand{\SL}{\mathsf{SL}}
\newcommand{\Ort}{\mathsf{O}}
\newcommand{\Sp}{\mathsf{Sp}}

\newcommand{\gr}{\operatorname{\mathsf{gr}}}
\newcommand{\Spec}{\operatorname{\mathsf{Spec}}}

\newcommand{\onto}{\twoheadrightarrow}

\newcommand{\Sym}{\operatorname{\mathsf{Sym}}}

\newcommand{\mf}{\mathfrak}
\newcommand{\mc}{\mathcal}

\newcommand{\bC}{\mathbf{C}}

\newcommand{\msC}{\mathsf{C}}
\begin{document}
\date{2011}
\author{Gwyn Bellamy}
\address{School of Mathematics, Room 2.233, Alan Turing Building, University of Manchester, Oxford Road, Manchester, M13 9PL}
\email{gwyn.bellamy@manchester.ac.uk}

\author{Travis Schedler}
\address{MIT Department of Mathematics, Room 2-172,
77 Massachusetts Avenue
Cambridge, MA 02139-4307}
\email{trasched@gmail.com}

\subjclass[2010]{16S80, 17B63}

\keywords{symplectic resolution, symplectic smoothing,
symplectic reflection algebra, Poisson algebra, Poisson variety,
symplectic leaves, quotient singularity, McKay correspondence}

\begin{abstract}
  We show that the quotient $\bC^4 / G$ admits a symplectic resolution
  for $G = Q_8 \times_{\bZ/2} D_8 < \Sp_4(\bC)$.  Here $Q_8$ is the
  quaternionic group of order eight and $D_8$ is the dihedral group of
  order eight, and $G$ is the quotient of their direct product which
  identifies the nontrivial central elements $-\Id$ of each. It is
  equipped with the tensor product representation $\bC^2 \boxtimes
  \bC^2 \cong \bC^4$.  This group is also naturally a subgroup of the
  wreath product group $Q_8^2 \rtimes S_2 < \Sp_4(\bC)$. We compute the
  singular locus of the family of commutative spherical symplectic
  reflection algebras deforming $\bC^4/G$. We also discuss preliminary
  investigations on the more general question of classifying linear
  quotients $V / G$ admitting symplectic resolutions.
\end{abstract}
\maketitle
\tableofcontents

\section{Introduction and main results}
The quotients $V/G$, for $G < \Sp(V)$ a finite subgroup, which admit a
symplectic resolution (this notion is recalled in the next subsection)
are known to include:
\begin{enumerate}
\item[(i)] The type $A_n$ Weyl groups $S_{n+1}$, acting on $V =
  \bC^{2n} = T^* \bC^n$, where $\bC^n$ is the reflection
  representation; here a resolution is given by the Hilbert scheme
  $\Hilb^{n+1} \bC^2 / \bC^2$;
\item[(ii)] The wreath product groups $H^n \rtimes S_n$, for $H <
  \SL_2(\bC)$ a finite subgroup, acting on $\bC^{2n}$; here a
  resolution is given by the Hilbert scheme $\Hilb^n
  \widetilde{\bC^2/H}$, where $\widetilde{\bC^2/H} \to \bC^2/H$ is the
  minimal resolution of the Kleinian (or du Val) singularity
  $\bC^2/H$;
\item[(iii)] The exceptional complex reflection group $G_4 <
  \GL_2(\bC) < \Sp_4(\bC)$.
\end{enumerate}
The main purpose of this paper is to add one more example to this list:
\begin{enumerate}
\item[(iv)] The group $G = Q_8 \times_{\bZ/2} D_8$, where $Q_8 <
  \SL_2(\bC)$ is the quaternionic group of order eight, $D_8 <
  \Ort_2(\bC)$ is the dihedral group of order eight, and $Q_8
  \times_{\bZ/2} D_8$ is the quotient of their product which
  identifies the centers of $Q_8$ and $D_8$, acting on the tensor
  product representation $\bC^2 \boxtimes \bC^2$.
\end{enumerate}
As we will discuss briefly in \S \ref{ss:lsq} below, we suspect there
are few (if any) other examples remaining to be discovered.
\begin{remark}
  In cases (i) and (ii) above, one can construct the symplectic
  resolution in a natural way by a certain Hamiltonian reduction
  procedure.  On the other hand, in case (iii), we do not know of such
  a construction (although Lehn and Sorger constructed in \cite{LS}
  a resolution in a more explicit computational manner). We have also
  been unable to find such a construction for our new example (iv). To
  find such a construction seems like an interesting problem.
\end{remark}

In what follows, we will provide more detailed explanations of the
above and explain the proof that (iv) admits a symplectic resolution,
up to a computation given in \S \ref{s:proof}.

\subsection{Symplectic resolutions}
A symplectic resolution $\pi: \tilde X \to X$ of a (singular) variety
$X$ is a (smooth) symplectic variety $\tilde X$ equipped with a
proper, birational map $\pi$ to $X$. We are particularly interested in
the case that $X$ is affine; in this case $\pi$ can also be viewed as
an ``affinization'' of the symplectic variety $\tilde X$.  Such
structures have attracted a lot of interest in the last decade: see,
e.g., \cite{FuSurvey, Kal-gtsr}, and have strong applications to
representation theory, quantum algebra, algebraic geometry and
symplectic geometry.  Examples include the Springer resolution $T^*
(G/B) \to \mathcal{N}$ of the nilpotent cone $\mathcal{N}$ and its
Kostant-Slodowy slices, the Hilbert scheme $\Hilb^n(S)$ of $n$ points
on a symplectic surface $S$ resolving its $n$-th symmetric power
$\Sym^n(S)$, Nakajima quiver varieties, hypertoric varieties, and in
the case $S = \widetilde{\bC^2/G}$ is a minimal resolution of a
Kleinian (or du Val) singularity $\bC^2/G$, then $\Hilb^n(S)$ also
resolves the affine singularity $\Sym^n(\bC^2/G)$ (this is example
(ii) of the previous subsection).

The symplectic structure on $\tilde X$ naturally endows $X$ with a
Poisson structure.  Conversely, if $X$ is a Poisson variety, we say
that it admits a symplectic resolution if there exists a resolution
$\tilde X$ as above, such that $\pi$ is a Poisson morphism.  It is an
interesting question to determine which Poisson varieties admit symplectic
resolutions---this is a very strong condition.  On the other hand,
when such resolutions exist, they are derived unique: by
\cite{Kal-deq}, any two symplectic resolutions of a Poisson variety
have equivalent derived categories of coherent sheaves.

In the case $X = \bC^{2n}/G, G < \Sp_{2n}(\bC)$, the only known
examples where $X$ admits a symplectic resolution are the cases
(i)--(iii) of the previous subsection, and products thereof.

We exhibit a new example of a linear symplectic quotient
admitting a symplectic resolution: $\bC^4 / G$, where $G = Q_8
\times_{\bZ/2} D_8$, where $Q_8 < \SL_2(\bC)$ is the quaternionic
group of order eight, $D_8 < \Ort_2(\bR) < \Ort_2(\bC)$ is the
dihedral group of order eight, and $G$ is the quotient of their direct
product identifying the nontrivial central elements $-\Id$ of
each. This group $G$ is equipped with the faithful tensor product
representation $\bC^4 = \bC^2 \boxtimes \bC^2$.  Since $Q_8$ preserves
a symplectic form on $\bC^2$ and $D_8$ preserves an orthogonal form on
$\bC^2$, their product naturally preserves a symplectic form on the
tensor product $\bC^4$.  Thus $G$ is naturally a subgroup of
$\Sp_4(\bC)$.  This group can also be realized explicitly as the
following subgroup of the wreath product $Q_8^2 \rtimes S_2$:
\begin{equation}
  G = \{ (\pm g) \oplus g, ((\pm g) \oplus g) \sigma \mid g \in Q_8\} < 
  Q_8^2 \rtimes S_2,
\end{equation}
where $\sigma \in S_2$ is the nontrivial permutation. 

Our main result is then
\begin{theorem}\label{t:main1}
  The quotient $\bC^4 / G$ admits a symplectic resolution
  $\widetilde{\bC^4 / G} \to \bC^4/G$.
\end{theorem}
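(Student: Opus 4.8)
The plan is to deduce Theorem~\ref{t:main1} from the general principle, due to Ginzburg--Kaledin and Namikawa, that for a finite $G < \Sp(V)$ the quotient $V/G$ admits a projective symplectic resolution if and only if a \emph{generic} member of its universal graded Poisson deformation is smooth. This universal deformation is realized by the Calogero--Moser family $X_c = \Spec(e H_{0,c} e)$, where $e H_{0,c} e$ is the commutative spherical subalgebra of the symplectic reflection algebra $H_{0,c}$ attached to $(\bC^4, G)$. Thus the whole problem reduces to showing that $X_c$ is a smooth variety for generic values of the parameter $c$, i.e.\ that the singular locus of the family $\{X_c\}$ is empty for generic $c$; this is exactly the content of the computation in \S\ref{s:proof}.

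First I would set up the symplectic reflection algebra explicitly. Using the realization $G = \{(\pm g)\oplus g,\ ((\pm g)\oplus g)\sigma \mid g \in Q_8\} < Q_8^2 \rtimes S_2$ together with the tensor-product symplectic form on $\bC^2 \boxtimes \bC^2$, I would list the symplectic reflections of $G$, namely the elements $s$ with $\rk(s - \Id) = 2$, and sort them into $G$-conjugacy classes. The number of such classes is the dimension of the parameter space of $c$, and determining it pins down the family $X_c$ that must be analyzed; the embedding into the wreath product is convenient here for writing the generators and the form concretely.

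Next I would reduce the smoothness question to a single point. The Poisson variety $X_c$ is stratified by its symplectic leaves, and by the local structure theorem for Calogero--Moser spaces the formal neighbourhood of a point $x$ is a product of a smooth factor with the Calogero--Moser space of the parabolic subgroup $G_v = \Stab_G(v)$, for $v \in \bC^4$ lying over $x$, acting on $\bC^4 / (\bC^4)^{G_v}$. Every proper parabolic $G_v \neq G$ fixes a nonzero vector and hence acts on a $2$-dimensional symplectic space, so it lies in $\SL_2(\bC)$; the corresponding Calogero--Moser space is a deformation of a Kleinian singularity and is therefore smooth for generic parameter. Consequently the only stratum that can obstruct smoothness is the zero-dimensional leaf lying over the image of the origin, and it remains to prove that $X_c$ is smooth at that point for generic $c$.

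The heart of the argument, carried out in \S\ref{s:proof}, is this last verification, and it is where I expect the main obstacle to lie. I would study the restricted symplectic reflection algebra $\overline{H}_c$ obtained from $H_{0,c}$ by specializing along the central character of the origin stratum: $X_c$ is smooth at the distinguished point precisely when $\overline{H}_c$ is Azumaya there, equivalently when all its simple modules attain the generic dimension $|G|$, equivalently when no baby Verma module degenerates. This translates into the nonvanishing of finitely many explicit polynomial expressions in $c$ (the analogue for this setting of the smoothness criterion of Etingof--Ginzburg). The delicate part is to compute these expressions for this specific order-$32$ group and to verify that they are not identically zero, so that the singular members of the family form a proper closed subset of parameter space. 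This is a finite but intricate computation in the representation theory of $G$ and of $\overline{H}_c$; once it exhibits a generic $X_c$ that is smooth at the origin, the reductions above show that such an $X_c$ is globally smooth, and the Ginzburg--Kaledin--Namikawa criterion then produces the desired symplectic resolution of $\bC^4/G$.
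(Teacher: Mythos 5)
Your high-level reduction is the same as the paper's: by Namikawa and Ginzburg--Kaledin, $\bC^4/G$ admits a symplectic resolution if and only if the commutative spherical algebra $eH_c(G)e$ is smooth for generic $c$, so everything hinges on proving that generic smoothness. Your intermediate step (proper parabolic subgroups of $G$ fix a nonzero vector, hence sit in $\SL_2(\bC)$, hence contribute generically smooth deformed Kleinian singularities, so only the zero-dimensional leaves can be singular for generic $c$) is sound, though inessential: the paper's criterion (Proposition \ref{p:cssra-smooth}(iii), no irreducible $H_c(G)$-module of dimension $<|G|$) treats all points at once. The genuine gap is that the heart of the theorem is left as a black box. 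You write that smoothness at the remaining stratum ``translates into the nonvanishing of finitely many explicit polynomial expressions in $c$'' and that the delicate part is ``to verify that they are not identically zero,'' but you give no argument, and no indication of what those expressions are or why they should be nonzero. This cannot be waved through as a routine finite computation: for many of the other rank-four symplectic reflection groups in Cohen's classification---groups that look superficially identical to $G$---the analogous genericity statement is believed to fail (see \S\ref{ss:lsq}), and among complex reflection groups it provably fails for all but $G_4$ and the two infinite families. Nothing in your proposal distinguishes $G$ from those groups, so the genericity claim is precisely what must be proved, not deferred. The paper's actual argument supplies this: every character $\chi$ of a finite-dimensional $H_c(G)$-module annihilates the elements $g[x,y]\in\bC[G]$ with $x\in V^g$ (Proposition \ref{p:exp-ann}), which yields the explicit linear equations \eqref{e:hyp-eqns1} and \eqref{e:hyp-eqns2} in $c$; and the group-theoretic fact that every nontrivial element of $G$ is either a symplectic reflection or a product of two symplectic reflections (Proposition \ref{p:Gfacts}) guarantees these equations are nontrivial unless $\chi$ is a multiple of the regular character. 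Since there are finitely many characters of dimension $<|G|$, generic $c$ avoids the resulting proper subspaces, giving smoothness. Your proposal contains no substitute for this chain of reasoning.

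A secondary error: your reformulation ``equivalently when no baby Verma module degenerates'' is unavailable here. $G$ is not a complex reflection group---it preserves no Lagrangian subspace of $\bC^4$---so $H_c(G)$ admits no triangular decomposition and no baby Verma modules exist. One must work directly with finite-dimensional irreducible representations (or with the quotient by a central character), as the paper does.
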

\subsection{Symplectic reflection algebras}
The proof of Theorem \ref{t:main1} is based on techniques from
\cite{EGsra} on the representation theory of symplectic reflection
algebras, together with a theorem of Namikawa \cite{Nam-pdasv}, and is
similar to that used in \cite[\S 7]{GordonBaby} and \cite[\S
4]{Belscms}. Namely, by Namikawa's result, since $\bC^4/G$ has a
contracting $\bC^\times$-action and the Poisson bracket has negative
degree with respect to this action, the existence of a symplectic
resolution follows from the existence of a smooth filtered Poisson
deformation of $\bC[V^*]^G$. Natural candidates for such a deformation
are the commutative spherical symplectic reflection algebras
$eH_c(G)e$ of \cite{EGsra}, where $H_c(G)$ is the symplectic
reflection algebra of \emph{op.~cit.} (with $t=0$, where $t$ is as in
\emph{op.~cit.} or \S \ref{ss:rsra} below), which deforms the skew
product algebra $\bC[V^*] \rtimes \bC[G]$, and $e \in \bC[G]$ is the
symmetrizer idempotent $e = \frac{1}{|G|}\sum_{g \in G} g$.
Conversely, by \cite[Corollary 1.21]{GiKal}, the existence of a
symplectic resolution of $V/G$ implies that the algebras $eH_c(G) e$
are generically smooth.  We deduce
\begin{theorem}\cite{GiKal,Nam-pdasv} The following conditions are
  equivalent:
\begin{enumerate}
\item[(i)] $V/G$ admits a symplectic resolution;
\item[(ii)] There exists a smooth commutative spherical symplectic
reflection algebra $eH_c(G)e$;
\item[(iii)] The algebras $eH_c(G)e$ are smooth for generic $c$.
\end{enumerate}
\end{theorem}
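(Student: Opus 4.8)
The plan is to assemble the two cited results into a short cycle of implications, after first recording the deformation-theoretic setup. By \cite{EGsra}, the commutative spherical symplectic reflection algebra $eH_c(G)e$ (at $t=0$) is a flat filtered Poisson algebra whose associated graded is the Poisson algebra $\bC[V^*]^G \cong \bC[V/G]$, the identification using the symplectic form on $V$. As $c$ varies, these algebras form a family of Poisson deformations of $\bC[V/G]$. The scaling action on $V$ induces a contracting $\bC^\times$-action on $\bC[V/G]$ for which the Poisson bracket has degree $-2$, so $\bC[V/G]$ is a conical symplectic singularity and the hypotheses of Namikawa's criterion apply. I would state this compatibility explicitly at the outset, since it is what licenses passage between ``smooth member of the spherical family'' and ``smooth graded Poisson deformation.''

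With this in place the equivalences follow formally. For (ii) $\Rightarrow$ (i), I would invoke Namikawa \cite{Nam-pdasv}: a conical symplectic singularity admits a symplectic resolution if and only if it admits a smooth graded Poisson deformation. A smooth $eH_c(G)e$ is precisely such a deformation, whence $V/G$ admits a symplectic resolution. For (i) $\Rightarrow$ (iii), I would apply \cite[Corollary 1.21]{GiKal}, which asserts that the existence of a symplectic resolution of $V/G$ forces the spherical algebras $eH_c(G)e$ to be smooth for generic $c$. The implication (iii) $\Rightarrow$ (ii) is immediate, since generic smoothness in particular exhibits a single smooth member of the family.

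The only substantive point, and the one I would verify with care, is that the spherical family really meets the hypotheses of Namikawa's theorem: that $eH_c(G)e$ is a flat Poisson deformation compatible with the conical grading, and that this family is rich enough that Namikawa's abstract equivalence transfers to the concrete statement about $eH_c(G)e$ (for instance, that the spherical family realizes the universal graded Poisson deformation of $\bC[V/G]$, or at any rate contains a smooth fiber exactly when one exists among all graded Poisson deformations). Once this identification is secured, everything else is a direct concatenation of the two cited theorems, so I expect no genuine obstacle beyond correctly matching the grading and flatness conventions of \cite{EGsra}, \cite{GiKal}, and \cite{Nam-pdasv}.
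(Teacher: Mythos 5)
Your proposal is correct and follows essentially the same route as the paper: (ii) $\Rightarrow$ (i) via Namikawa's criterion applied to the conical Poisson structure on $\bC[V^*]^G$, (i) $\Rightarrow$ (iii) via \cite[Corollary 1.21]{GiKal}, and (iii) $\Rightarrow$ (ii) trivially (the paper notes the converse of this last step follows from openness of smoothness in $c$). Your closing worry about needing the spherical family to be a universal graded Poisson deformation is unnecessary: universality would only be required if one tried to deduce (i) $\Rightarrow$ (iii) from Namikawa alone, but that direction is exactly what the citation of \cite{GiKal} supplies.
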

We remark that the equivalence of (ii) and (iii) is also clear since
smoothness is an open condition in $c$.  We will prove
\begin{theorem}\label{t:main2}
  For $G = Q_8 \times_{\bZ/2} D_8$, $eH_c(G)e$ is smooth for
  generic parameters $c$.
\end{theorem}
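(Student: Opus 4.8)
The plan is to use the standard identification of smoothness with the Azumaya property. At $t=0$ the Satake isomorphism $Z(H_c(G)) \cong eH_c(G)e$ of \cite{EGsra} makes $H_c(G)$ a finite module over its centre, and by \cite{EGsra} the smooth locus of $X_c := \Spec\bigl(eH_c(G)e\bigr)$ coincides with the Azumaya locus of $H_c(G)$: a point is smooth exactly when the unique irreducible representation with that central character attains the maximal dimension $|G| = 32$. Hence proving Theorem~\ref{t:main2} amounts to showing that, for generic $c$, the algebra $H_c(G)$ has no irreducible representation of dimension less than $32$. Since the singular (equivalently non-Azumaya) locus is closed and stable under the contracting $\bC^\times$-action, I would control it leaf by leaf along the symplectic stratification of $\bC^4/G$.

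First I would list the symplectic reflections of $G$---the elements $g$ with $\rk(g-\Id)=2$---and partition them into conjugacy classes; this pins down the coordinates of the parameter $c$. I would then stratify $\bC^4$ by stabiliser type. The open stratum has trivial stabiliser and yields only Azumaya points. Along a codimension-two symplectic leaf the stabiliser $G' \leq G$ acts on the transverse two-dimensional symplectic slice as a finite subgroup of $\SL_2(\bC)=\Sp_2(\bC)$, and by the standard reduction to the stabiliser (see \cite{EGsra}) the local geometry of $X_c$ there is governed by the rank-one algebra $H_{c'}(G')$, whose spherical subalgebra is the semiuniversal Poisson deformation of the Kleinian singularity $\bC^2/G'$ and is therefore smooth for generic values of the transverse parameter $c'$. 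As $c'$ is a coordinate projection of $c$, a generic $c$ smooths these finitely many transverse slices at once, so that the only representations of dimension below $32$ that could survive are those whose central characters are $\bC^\times$-fixed points of $X_c$, i.e.\ the cuspidal representations attached to the deepest stratum.

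The main obstacle is therefore the origin $0\in\bC^4/G$, whose transverse slice is all of $\bC^4/G$ and whose governing algebra is $H_c(G)$ itself: I must rule out, for generic $c$, any cuspidal irreducible $H_c(G)$-module of dimension strictly less than $32$. A complication peculiar to this example is that $G$ preserves no Lagrangian polarisation of $\bC^4$---the dihedral factor interchanges the two isotropic lines of its two-dimensional representation---so the restricted-algebra and baby-Verma technology developed for rational Cherednik algebras of complex reflection groups (as in \cite{GordonBaby}) is not directly available, and the analysis must be performed for the honest symplectic reflection algebra in the spirit of \cite{Belscms}. Using the $\bC^\times$-grading I would reduce the question to a finite-dimensional computation at the most singular point and show that this fibre fails to be a product of full matrix algebras $M_{32}(\bC)$ only for $c$ in a proper closed subset of the parameter space.

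I expect this final step to be the crux, and it is carried out in \S\ref{s:proof}: it relies on the explicit character theory of $G$ together with a direct determination of the module structure of $H_c(G)$ over the origin, and on confining the degeneracy of this fibre to a finite union of hyperplanes in the space of parameters $c$; everything else reduces to the Kleinian picture recalled above.
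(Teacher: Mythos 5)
Your framework is sound as far as it goes: the identification of smoothness of $eH_c(G)e$ with the Azumaya condition (no irreducible $H_c(G)$-module of dimension $< |G| = 32$) is exactly Proposition \ref{p:cssra-smooth}, and your leaf-by-leaf reduction to transverse slices is essentially Losev's theory (Theorems \ref{t:Losev1} and \ref{t:Losev2}), which the paper does use---but only later, for Theorem \ref{t:main3}. The problem is that this reduction buys nothing for the theorem at hand: as you yourself note, the transverse slice at the origin is all of $\bC^4/G$ and its governing algebra is $H_c(G)$ itself, so ``ruling out cuspidal irreducible modules of dimension $<32$ for generic $c$'' is the entire original problem restated. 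At that point you write that the crux ``is carried out in \S\ref{s:proof}'' and appeal to ``explicit character theory'' and ``a direct determination of the module structure of $H_c(G)$ over the origin'' without giving any argument. That is the gap: no mechanism is supplied that actually constrains the representations, and none of the general machinery you invoke (Kleinian slices, Azumaya loci, $\bC^\times$-contraction---note also that for fixed $c$ the deformed variety $\Spec eH_c(G)e$ carries no contracting $\bC^\times$-action) can supply it, since precisely this step is what separates $G$ from the many symplectic reflection groups for which the theorem is false.

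The missing idea in the paper is concrete and elementary: for any finite-dimensional $H_c(G)$-module with $G$-character $\chi$, and any $g \in G$, $x \in V^g$, $y \in V$, the element $g[x,y]$ lies in $\bC[G]$ and is killed by $\chi$ (Proposition \ref{p:exp-ann}). Writing out these traces produces explicit linear equations in $c$ with coefficients $\chi(s)$ and $\chi(gs)$, namely \eqref{e:hyp-eqns1} and \eqref{e:hyp-eqns2}. The punchline is the special group-theoretic fact (Proposition \ref{p:Gfacts}) that every nontrivial element of $G$ is either a symplectic reflection or a product of two symplectic reflections; hence if $\chi$ is not a multiple of the regular character, at least one of these equations is a nontrivial linear condition on $c$. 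Since there are finitely many characters of representations of dimension $<32$, a generic $c$ avoids all the resulting proper subspaces, and condition (iii) of Proposition \ref{p:cssra-smooth} holds. Without this (or some substitute, such as the direct computation that for a suitable single value of $c$ the elements $g[x,y]$ span $\Span(h : h \in G, h \neq \Id) \subseteq \bC[G]$), your proposal does not prove the theorem.
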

Later, in \S \ref{s:main3}, we will prove a much more general result,
which completely classifies the parameters $c$ for which the algebra
$e H_c(G) e$ is smooth (Theorem \ref{t:main3}), which turns out to be
the complement of exactly $21$ hyperplanes. There, we will also
describe in more detail the singular locus of the varieties $\Spec e
H_c(G) e$.

Recall that, for general $G < \Sp(V)$, commutative spherical symplectic
reflection algebras $eH_c(G)e$ are parameterized by
class functions $c: \bC[G] \to \bC$ (i.e., conjugation-invariant
functions) which are supported on the symplectic reflections $\cS
\subseteq G$, i.e., those elements $s \in G$ such that $s - \Id$ has
rank two.  In our example, there are five conjugacy classes of such
elements, so the parameter space is five-dimensional.

To prove Theorem \ref{t:main2}, we use the following reformulations of
smoothness for commutative spherical symplectic reflection algebras, 
at least some of which are probably well known:
\begin{proposition}\label{p:cssra-smooth}
  The following conditions are equivalent for a commutative spherical
symplectic reflection algebra $eH_c(G)e$:
\begin{enumerate}
\item[(i)] $eH_c(G)e$ is smooth;
\item[(ii)] $H_c(G)$ admits no irreducible representations which, as $G$-representations, are proper subrepresentations of the regular representation;
\item[(iii)] $H_c(G)$ admits no irreducible representations of
  dimension strictly less than $|G|$;
\item[(iv)] All finite-dimensional representations of $H_c(G)$ are, as
  $G$-representations, direct sums of finitely many copies of the
  regular representation;
\item[(v)] All irreducible representations of
  $H_c(G)$ restrict to the regular representation of $G$.
\end{enumerate}
\end{proposition}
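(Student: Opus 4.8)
The plan is to run everything through the $t=0$ structure theory of $H_c(G)$ from \cite{EGsra}, together with the identification of the Azumaya locus with the smooth locus. First I would collect the relevant inputs: for $t=0$ the spherical subalgebra $eH_c(G)e$ is commutative and the Satake map gives an isomorphism $eH_c(G)e \cong Z_c := Z(H_c(G))$; the algebra $H_c(G)$ is a finite module over $Z_c$, the variety $\Spec Z_c$ is irreducible (so $H_c(G)$ is prime) of dimension $\dim V$, and the PI degree of $H_c(G)$ equals $|G|$. Consequently every irreducible $H_c(G)$-module $M$ is finite-dimensional, carries a central character $\chi_M \in \Spec Z_c$, and satisfies $\dim_\bC M \le |G|$; moreover \cite{EGsra} shows $\dim_\bC M = |G|$ iff $M|_G \cong \bC[G]$ (the regular representation), and that this occurs exactly when $\chi_M$ lies in the Azumaya locus of $H_c(G)$ over $Z_c$, in which case $M$ is the unique irreducible with that central character. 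The final ingredient is the Brown--Gordon theorem identifying the Azumaya locus of $H_c(G)$ with the smooth locus of $\Spec Z_c$, which applies thanks to the Cohen--Macaulay and Gorenstein properties established in \cite{EGsra}.

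Granting these inputs, the core equivalence (i) $\Leftrightarrow$ (iii) is almost immediate: $eH_c(G)e$ is smooth iff $\Spec Z_c$ is smooth at every point iff the Azumaya locus is all of $\Spec Z_c$. Since every closed point $\chi$ is the central character of some irreducible (the fiber $H_c(G)/\mathfrak{m}_\chi H_c(G)$ is nonzero and finite-dimensional by Nakayama), and $\chi$ lies in the Azumaya locus iff the irreducibles with central character $\chi$ all have dimension $|G|$, the condition ``Azumaya everywhere'' is equivalent to ``no irreducible of dimension $< |G|$,'' which is (iii). The equivalence (iii) $\Leftrightarrow$ (v) is then just the dichotomy $\dim_\bC M \le |G|$ with equality iff $M|_G \cong \bC[G]$. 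For (v) $\Rightarrow$ (iv) I would use that $\bC[G]$ is semisimple: a finite-dimensional $H_c(G)$-module $N$ has a composition series whose factors restrict to $\bC[G]$, and since extensions of $G$-modules split, $N|_G$ is a direct sum of copies of the regular representation; conversely (iv) $\Rightarrow$ (iii), since an irreducible restricting to $k$ copies of $\bC[G]$ has dimension $k|G| \ge |G|$, forcing $k=1$ by the PI bound.

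It remains to fold in (ii). The implication (v) $\Rightarrow$ (ii) is trivial, as an irreducible restricting to the full regular representation is not a proper subrepresentation of it. For the converse I would invoke the fact from \cite{EGsra} that every irreducible $H_c(G)$-module restricts to a $G$-subrepresentation of the regular representation; granting this, ``$M|_G$ is a proper subrepresentation of $\bC[G]$'' is equivalent to ``$\dim_\bC M < |G|$,'' so the absence of such (ii) is precisely (iii). This embedding is the one genuinely nonformal point in handling (ii): it reflects the prohibition of excessive isotypic multiplicities---for instance, were $G$ to act on an irreducible $M$ through a single character, the commutators $gvg^{-1}-v$ for $v \in V$ would force $V$ to act by zero and $M$ to be one-dimensional---and I would either cite it directly or deduce it from the Cohen--Macaulay structure of the module $H_c(G)e$ over $Z_c$.

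The main obstacle is the identification of the Azumaya locus with the smooth locus underlying (i) $\Leftrightarrow$ (iii), together with the homological regularity of $Z_c$ that it rests on; a secondary subtlety is the embedding of each irreducible into the regular representation needed for (ii). Everything else is formal manipulation with the dimension bound $\dim_\bC M \le |G|$ and the semisimplicity of $\bC[G]$.
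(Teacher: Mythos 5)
Your treatment of (i), (iii), (iv), (v) --- the Satake isomorphism, the PI-degree bound $\dim M \le |G|$, Etingof--Ginzburg's Theorem 1.7 at smooth points, and the identification of the Azumaya locus with the smooth locus --- is correct and essentially identical to the paper's proof. The genuine gap is in how you fold in (ii). To get from (ii) back to (iii) you invoke ``the fact from \cite{EGsra} that every irreducible $H_c(G)$-module restricts to a $G$-subrepresentation of the regular representation.'' No such statement appears in \cite{EGsra}: Theorem 1.7 there concerns only the representations lying over the smooth (equivalently Azumaya) locus, and the claim does not follow from the proposition itself --- a posteriori one only learns that \emph{if} a small irreducible exists \emph{then} some irreducible properly embedding in $\bC[G]$ exists, not that \emph{every} irreducible of dimension $<|G|$ embeds (i.e.\ has each isotypic multiplicity bounded by the dimension of the corresponding $G$-irreducible). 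Your fallback, deducing the embedding from Cohen--Macaulayness of $H_c(G)e$ over $Z(H_c(G))$, also does not work: the center is precisely \emph{not} regular in the cases at issue, and a CM module over a singular base has fibers whose dimension and $G$-structure can jump --- exactly over the singular points one needs to control. Even Losev's results (Theorems \ref{t:Losev1} and \ref{t:Losev2} in the appendix) give such an embedding only for irreducibles supported on leaves labeled by \emph{proper} parabolic subgroups $P$, where the module is $\Ind_P^G$ of something; they say nothing about irreducibles supported on zero-dimensional leaves labeled by $(G)$.

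The paper closes this gap by a different mechanism, which you should adopt. Lemma \ref{lem:regrep} (proved via \cite[Theorem 3.7]{EGsra}, i.e.\ the irreducible component $\Rep^{\circ}$ of the scheme of representations on $\bC[G]$, whose quotient by $\mathsf{Aut}_G(\bC G)$ is $\Spec Z(H_c(G))$) shows that \emph{every} central character $\eta$ is realized by some, possibly reducible, module $M$ with $M|_G \cong \bC[G]$. Assuming (ii), any proper irreducible submodule of $M$ would restrict to a proper $G$-subrepresentation of $\bC[G]$, so $M$ must be irreducible of dimension $|G|$; hence $\eta$ lies in the Azumaya locus, which equals the smooth locus, and (ii) $\Rightarrow$ (i) follows. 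Combined with the trivial chain (i) $\Rightarrow$ (v) $\Rightarrow$ (iv) $\Rightarrow$ (iii) $\Rightarrow$ (ii) (a proper subrepresentation of the regular representation has dimension $<|G|$), this completes the cycle without ever asserting that every irreducible embeds in $\bC[G]$ --- a statement which, to our knowledge, is not known in this generality, and whose truth is exactly why the paper needs condition (ii) (and Lemma \ref{lem:regrep}) to make the computation behind Theorem \ref{t:main3} tractable.
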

\begin{proof}
  Clearly (v) $\Rightarrow$ (iv) $\Rightarrow$ (iii) $\Rightarrow$ (ii).
 By \cite[Theorem
  3.1]{EGsra}, there is a Satake isomorphism $Z(H_c(G)) \iso eH_c(G)
  e$ given by $z \mapsto z \cdot e$, where $Z(H_c(G))$ is the center
  of $H_c(G)$. Therefore (i) is equivalent to $Z(H_c(G))$ being
  smooth.

 By \cite[Theorem 1.7]{EGsra}, for every character $\eta: Z(H_c(G))
 \to \bC$ contained in the smooth locus of $\mathrm{Spec} (Z(H_c(G))$,
 the quotient $H_c(G) / \ker(\eta)$ is a matrix algebra with unique
 simple representation $H_c(G) e / \ker(\eta) e \cong
 \bC[G]$. Therefore (i) implies (v). 

 Since the smooth locus of $Z(H_c(G))$ is dense in $\Spec Z(H_c(G)$,
 \cite[Theorem 1.7]{EGsra} also implies that the P.I. degree of
 $H_c(G)$ equals $|G|$. Hence (iii) implies that the Azumaya locus of
 $H_c(G)$ equals $\Spec Z(H_c(G))$. However, it is known,
 e.g. \cite[Theorem 4.8]{GordonSurvey}, that the smooth locus of
 $Z(H_c(G))$ equals the Azumaya locus of $H_c(G)$ over
 $Z(H_c(G))$. Therefore (iii) implies (i).

 Using Lemma \ref{lem:regrep} below, we can show also that (ii)
 implies (i).\footnote{Since we will only actually need this
   implication for Theorem \ref{t:main3} and not for Theorem
   \ref{t:main2}, we postponed Lemma \ref{lem:regrep} used here to \S
   \ref{ss:regrep}.}  Suppose that (ii) holds.  By Lemma
 \ref{lem:regrep}, for every point of $\Spec Z(H_c(G)$, i.e.,
 every character $\eta$ of $Z(H_c(G))$, there exists a representation
 $M$ of $H_c(G)$ isomorphic to the regular representation with central
 character $\eta$.  By (ii), this must be irreducible. Because the
 P.I. degree of $H_c(G)$ equals $|G|$, again $\eta$ must be in the
 Azumaya locus and hence a smooth point. Thus (ii) implies (i).
\end{proof}
We will prove Theorem \ref{t:main2} by demonstrating that condition
(iii) holds for certain values of $c$ (and hence also for generic
$c$).  We will not need (ii) for Theorem \ref{t:main2}, but will use it in
the proof of the stronger Theorem \ref{t:main3}.

\subsection{Restrictions on the $G$-character of representations of
  symplectic reflection algebras}
To show that condition (iii) holds for generic $c$ (or equivalently,
some value of $c$), we exhibit sufficiently many restrictions on the
$G$-character $\chi$ of finite-dimensional representations of
$H_c(G)$. These restrictions apply to arbitrary symplectic reflection algebras.

For now, let $G < \Sp(V)$ be an arbitrary finite subgroup, for an
arbitrary symplectic vector space $V$.  Let $H_c(G)$ be a symplectic
reflection algebra deforming $\bC[V^*] \rtimes G$, and let $\rho: H_c(G)
\to \End(U)$ be a finite-dimensional representation. Whenever $x,y \in
V$, then $[x,y] \in \bC[G]$, and evidently $\tr(\rho([x,y]))= 0$. This
means that the character $\chi := \tr \circ \rho|_{\bC[G]}$ of $U$
annihilates all commutators $[x,y]$. These commutators are certain
explicit elements of $\bC[G]$ supported on $\cS$, that we will
describe later.

To show that $\chi$ must be a multiple of the regular character, i.e.,
that $\chi(g) = 0$ for all nontrivial $g$, such restrictions cannot be
sufficient unless all nontrivial elements of $G$ are symplectic
reflections. This only happens when $G < \SL_2(\bC)$.\footnote{On the
  other hand, in this case, one can indeed deduce that condition (iii)
  of Proposition \ref{p:cssra-smooth} holds for generic $c$, which are
  just class functions supported away from the trivial element of $G$,
  and this gives another proof of the well known fact that $\bC^2/G$
  admits a symplectic resolution.}  To obtain more restrictions, we
observe that, whenever $g \in G$, $x$ is a fixed vector of $g$, and $y
\in V$ is another element, then $[x, gy] =g[x,y] \in \bC[G]$, so that
$\chi(g[x,y]) = 0$ as well.  We have deduced:
\begin{proposition}\label{p:exp-ann}
  Let $H_c(G)$ be a symplectic reflection algebra associated to $G <
  \Sp(V)$. Let $g \in G$, $x \in V^g$, and $y \in V$. Then the element
  $g[x,y] \in H_c(G)$ lies in $\bC[G]$, and is annihilated by all
  characters of finite-dimensional representations of $H_c(G)$.
\end{proposition}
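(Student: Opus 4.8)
The plan is to derive both assertions from a single algebraic identity in $H_c(G)$, namely $g[x,y] = [x, gy]$, which holds precisely because $x \in V^g$. Everything then reduces to two standard facts: the defining commutation relation of a symplectic reflection algebra, and the vanishing of the trace on any commutator of endomorphisms.

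First I would record how $G$ acts on $V$ inside $H_c(G)$: for any $h \in G$ and $v \in V$ one has $h v h^{-1} = h(v)$, the image of $v$ under the linear $G$-action. Applying this with $v = x$ and using $x \in V^g$, i.e.\ $g(x) = x$, gives $g x g^{-1} = x$, so that $x$ and $g$ commute in $H_c(G)$. With this in hand the computation is immediate:
\[
[x, gy] = x g y - g y x = g x y - g y x = g(xy - yx) = g[x,y],
\]
where the second equality uses $xg = gx$. For the first assertion, I would invoke the defining relations of $H_c(G)$: for $x, y \in V$ the bracket $[x,y]$ equals an explicit $\bC$-linear combination of symplectic reflections $s \in \cS$, so $[x,y] \in \bC[G]$; since $\bC[G]$ is a subalgebra, $g[x,y] \in \bC[G]$ as well.

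For the second assertion, I would let $\rho \colon H_c(G) \to \End(U)$ be a finite-dimensional representation with character $\chi = \tr \circ \rho$. Because $g[x,y] = [x, gy]$ as elements of $H_c(G)$, we obtain
\[
\chi(g[x,y]) = \tr\bigl(\rho([x, gy])\bigr) = \tr\bigl([\rho(x), \rho(gy)]\bigr) = 0,
\]
the last equality being the vanishing of the trace of a commutator of matrices. There is no genuine obstacle here; the only point requiring care is to carry out the manipulations in the algebra $H_c(G)$ rather than in $V$, and in particular to justify $gx = xg$ as an identity in $H_c(G)$ (as opposed to merely $g(x) = x$ in $V$), since it is exactly this that makes $g[x,y]$ literally a commutator and hence forces the trace to vanish.
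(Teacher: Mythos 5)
Your proof is correct and follows essentially the same route as the paper: the paper's own (inline) argument is precisely the identity $[x,gy]=g[x,y]$, valid because $x\in V^g$ makes $x$ and $g$ commute in $H_c(G)$, combined with $[x,y]\in\bC[G]$ from the defining relations and the vanishing of the trace of a commutator of endomorphisms. Your write-up merely makes explicit the intermediate steps (the crossed-product relation $hvh^{-1}=h(v)$ and the three-line computation) that the paper leaves implicit.
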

Then, the proof of Theorem \ref{t:main2}, and hence also Theorem
\ref{t:main1}, is completed by a straightforward computation of the
elements $g[x,y]$ that can arise in the case $G = Q_8 \times_{\bZ/2}
D_8$; together with the above proposition this will imply that
condition (iii) of Proposition \ref{p:cssra-smooth} holds.  We do this
in \S \ref{s:proof} below.
\begin{remark}
  The above proposition provides an algorithm for restricting the
  characters of finite-dimensional representations of $H_c(G)$ for
  generic $c$.  In fact, this was how we discovered our theorem in the
  first place. However, note that for the example of $G = G_4 <
  \GL_2(\bC) < \Sp_4(\bC)$, as computed in \cite[\S 4]{Belscms}, the
  algorithm above only restricts the $G$-representations to be a
  direct sum of copies of two representations (denoted $E$ and $F$ in
  \emph{op.~cit.}), of dimension less than $|G|$. Therefore these
  restrictions are not, in general, exhaustive, and do not give a
  necessary condition for $V/G$ to admit a symplectic resolution
  (since $\bC^4/G_4$ does admit a resolution by \cite{Belscms}).
\end{remark}

\subsection{On the (non)existence of symplectic resolutions for other
  linear symplectic quotients}\label{ss:lsq}
In this section, we explain what we know about the question of which
finite groups $G < \Sp(V)$ have the property that $V/G$ admits a
symplectic resolution, which we would like to address in future work.

By \cite{Verhsgos}, it is known that a linear symplectic quotient
$V/G$ by a finite subgroup $G < \Sp(V)$ can only admit a symplectic
resolution if $G$ is generated by symplectic reflections. In the case
that $G$ preserves a Lagrangian subspace $U$, so $G < \GL(U) <
\Sp(V)$, i.e., $G$ is a complex reflection group, these have a well
known classification by Shephard and Todd \cite{STfurg}. It was shown,
first for finite Coxeter groups in \cite{GordonBaby}, and then for
all complex reflection groups in \cite{Belscms} that, aside from one
exceptional group, denoted by $G_4$, only the infinite families
already mentioned (Weyl groups $S_{n+1}$ and wreath products
$(\bZ/m)^n \rtimes S_n$) have the property that $V/G$ admits a
symplectic resolution.

On the other hand, there are many groups generated by symplectic
reflections that are not complex reflection groups. These groups have
been classified in \cite{Cohsrc}.  Aside from finitely many
exceptional groups, they fall into infinite families. These infinite
families are subgroups of wreath products $\Gamma^n \rtimes S_n$,
where $\Gamma$ is some finite subgroup of $\SL_2(\bC)$ of type $D$ or
$E$: there are many types of such families for $\dim V = 4$, and a
few types of such families of increasing dimension $4, 6, 8, \ldots$.
Our group $G = Q_8 \times_{\bZ/2} D_8$ is included in the latter list
(it can be thought of as lying in an infinite family of either type).

Preliminary (but not definitive) computer evidence we have considered
seems to suggest that, for the infinite families involving $\dim V >
4$, and many of the infinite families in the case $\dim V = 4$, there
is no smooth commutative spherical symplectic reflection algebra
deforming $V/G$ and hence no symplectic resolution.  The problem
essentially reduces to the case of the families in $\dim V = 4$,
because the infinite families all contain parabolic subgroups $K < G$
such that $\dim (V^K)^\perp = 4$, and then one can adapt Losev's work
\cite{Los-csra} to show that, if $V/G$ admits a smooth deformation by a
commutative spherical symplectic reflection algebra, so must
$(V^K)^\perp/K$ as well.  In these cases, $K$ is in one of the
infinite families for the case of dimension four, so (except when $K$
is our group $Q_8 \times_{\bZ/2} D_8$), one reduces to showing that
$\bC^4/K$ admits no smooth deformation by a commutative spherical
symplectic reflection algebra.

We would guess that our group $G = Q_8 \times_{\bZ/2} D_8$ is the only
group in any of Cohen's aforementioned infinite families (aside from
the wreath products of groups in $\SL_2(\bC)$) such that $V/G$ admits
a symplectic resolution.  We do not presently have any understanding
of the (finitely many) exceptional symplectic reflection groups on
Cohen's list that are not complex reflection groups.

\subsection{Acknowledgements}
The second author is grateful to his coauthors of \cite{hp0bounds}
for discussions about the group $G$ considered here, which led to the
discovery of Proposition \ref{p:hp0-cssra} and partially motivated
this work.  The first author is supported by the EPSRC grant
EP-H028153. The second author is a five-year fellow of the American
Institute of Mathematics, and was partially supported by the
ARRA-funded NSF grant DMS-0900233.  The authors would like to thank
the Max-Planck Institute for Mathematics in Bonn, where some of this
work was done, for hospitality.

\section{The group $Q_8 \times_{\bZ/2} D_8$}
It is useful to describe the group $G = Q_8 \times_{\bZ/2} D_8$ in
some more detail---it turns out to enjoy some remarkable properties.

Let $i \in \bC$ denote the usual ``imaginary'' number, i.e., $i^2 = -1$.
Let
\[
Q_8 := \{\pm \Id, \pm I, \pm J, \pm K \mid IJ = K, JK = I, KI = J,
I^2=J^2=K^2=-\Id\} < \SL_2(\bC)
\]
be the usual description of $Q_8$. A faithful representation is given by: 
\[
I = \begin{pmatrix} i & 0 \\ 0 & -i \end{pmatrix}, \  J = \begin{pmatrix}
  0 & -1 \\ 1 & 0 \end{pmatrix}, \ K = \begin{pmatrix} 0 & -i \\ -i &
  0 \end{pmatrix}.
\]

Let 
\[
D_8 := \{\Id, \rho, \rho^2, \rho^3, \sigma, \sigma \rho, \sigma
\rho^2, \sigma \rho^3 \mid \sigma^2=\Id=\rho^4, \sigma \rho \sigma =
\rho^{-1}\} < \Ort_2(\bC)
\] 
be the usual description of $D_8$. A faithful representation is given by: 
\[
\rho = \begin{pmatrix} 0 & -1 \\ 1 & 0 \end{pmatrix}, \ \sigma
= \begin{pmatrix} 0 & 1 \\ 1 & 0 \end{pmatrix}.
\]
Note that the centers of $Q_8$ and $D_8$ are both $\pm \Id = \rho^2$
(which also coincide with the subgroup of scalar matrices, since
$\bC^2$ is an irreducible representation of both).  This makes $Q_8
\times_{\bZ/2} D_8$ act on $\bC^2 \otimes \bC^2 \cong \bC^4$,
preserving the product of the symplectic form on the first factor and
the orthogonal form on the second factor (as pointed out in the
introduction). That is, it preserves a symplectic form on $\bC^4$, and
this identifies $G := Q_8 \times_{\bZ/2} D_8 < \Sp_4(\bC)$.  We will
refer to the defining representation $\bC^4$ as the \emph{symplectic
  reflection representation}. It is clear that it is irreducible.

We now collect the facts we will need about $G$:
\begin{proposition}\label{p:Gfacts}
\begin{enumerate}
\item[(i)] All conjugacy classes of $G$, except for $\{\Id\}$ and
  $\{- \Id\}$, are of order two and of the form $\{\pm (g,h)\}$.
\item[(ii)] The symplectic reflections in $Q_8 \times_{\bZ/2} D_8$ are
  the noncentral elements $(g,h)$ where $g \in Q_8$ and $h \in D_8$
  have the same order (two or four).
\item[(iii)] Equivalently, the symplectic reflections are exactly the noncentral
  elements of order two.
\item[(iv)] Explicitly, there are five conjugacy classes of symplectic
  reflections:
\[
\{\pm (I, \rho)\}, \{\pm (J, \rho)\}, \{\pm (K,
  \rho)\}, \{\pm (\Id, \sigma)\}, \{\pm (\Id, \sigma \rho)\}.
\]
\item[(v)] The group $G$ has seventeen irreducible representations
  over $\bC$; sixteen of them are one dimensional and the other is the
  symplectic reflection representation $\bC^4$.
\end{enumerate}
\end{proposition}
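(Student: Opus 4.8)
The plan is to exploit the semidirect/fiber-product structure of $G = Q_8 \times_{\bZ/2} D_8$ throughout, so that every statement reduces to an elementary check on the factors $Q_8$ and $D_8$. First I would set up notation: an element of $G$ is a pair $(g,h)$ with $g \in Q_8$, $h \in D_8$, subject to the identification $(-\Id, \Id) = (\Id, \rho^2)$ coming from the fiber product over $\bZ/2$; thus $|G| = 8 \cdot 8 / 2 = 32$. The action on $\bC^4 = \bC^2 \boxtimes \bC^2$ is $(g,h) \mapsto g \otimes h$, and I would record the (well-known) character tables of $Q_8$ and $D_8$, since these drive both (iv) and (v).

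For (i), I would compute conjugacy classes directly. Since $\bZ(Q_8) = \bZ(D_8) = \{\pm\Id\}$ and these are identified, the central element $(-\Id,\Id) = (\Id,\rho^2)$ is exactly the nontrivial central element of $G$, giving the singleton classes $\{\Id\}$ and $\{-\Id\}$. For any noncentral $(g,h)$, conjugating by $(g',h')$ sends it to $(g'gg'^{-1}, h'hh'^{-1})$; because the only nontrivial relation among conjugates in $Q_8$ (resp. $D_8$) produces a sign ambiguity $\pm g$ (resp. $\pm h$), and because these signs are identified in $G$, I expect each noncentral class to be precisely $\{(g,h), (-g,-h)\} = \{\pm(g,h)\}$, of size two. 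I would verify this by checking that conjugation cannot move $(g,h)$ outside this pair, using that $Q_8$-conjugacy classes are $\{I,-I\}$, $\{J,-J\}$, $\{K,-K\}$ and $D_8$-conjugacy classes are $\{\rho,\rho^3\} = \{\rho,-\rho\}$, $\{\sigma,\sigma\rho^2\} = \{\sigma,-\sigma\}$, $\{\sigma\rho,\sigma\rho^3\}$.

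For (ii)--(iv), the key observation is that an element $g\otimes h$ acting on $\bC^2\otimes\bC^2$ has eigenvalues that are products of an eigenvalue of $g$ with one of $h$; hence $g\otimes h - \Id$ has rank two (the defining condition for a symplectic reflection) precisely when $g\otimes h$ has the eigenvalue $1$ with multiplicity exactly two. I would run through the possible orders: when both $g$ and $h$ have eigenvalues $\{\lambda,\lambda^{-1}\}$ and $\{\mu,\mu^{-1}\}$ respectively, the four tensor eigenvalues are $\lambda\mu,\lambda\mu^{-1},\lambda^{-1}\mu,\lambda^{-1}\mu^{-1}$, and a short case analysis on orders shows that exactly two of these equal $1$ precisely when $g$ and $h$ have equal order in $\{2,4\}$ and $(g,h)$ is noncentral, which is (ii); the equivalence with ``noncentral of order two'' in (iii) follows since in $G$ these pairs square to $(g^2,h^2) = (\pm\Id,\pm\Id)$ which is central, and one checks the product lands in the identified center. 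Enumerating the actual pairs yields the five classes in (iv), matching (i). Finally, for (v), I would use that the number of irreducibles equals the number of conjugacy classes: from (i) the class count is $2 + \tfrac{1}{2}(32-2) = 17$. The sixteen linear characters come from the abelianization $G/[G,G]$, which I would compute to have order sixteen (equivalently $[G,G] = \{\pm\Id\}$), and the remaining irreducible must be four-dimensional by the sum-of-squares relation $16\cdot 1 + 4^2 = 32 = |G|$; I would identify it with $\bC^4$ after verifying irreducibility of the tensor representation via its character.

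\medskip

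\emph{Main obstacle.} The genuinely delicate point is the rank computation underlying (ii)--(iv): ensuring the eigenvalue bookkeeping for $g\otimes h$ is exhaustive and that the ``same order'' criterion is both necessary and sufficient, while correctly accounting for the fiber-product identification of signs (so that, e.g., $(\Id,\sigma)$ and $(-\Id,-\sigma)$ are the same group element and the pair $\{\pm(I,\rho)\}$ really is a single conjugacy class of genuine symplectic reflections rather than two). Everything else is routine group-theoretic and character-theoretic verification.
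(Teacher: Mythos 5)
Your proposal is correct and follows essentially the same route as the paper: part (i) via the fact that conjugacy classes in $G$ are (images of) products of the factor classes in $Q_8$ and $D_8$, parts (ii) and (iv) via the eigenvalue bookkeeping for $g \otimes h$ on $\bC^2 \otimes \bC^2$, and part (iii) via the squaring computation showing mixed-order pairs square to the nontrivial central element while same-order pairs are involutions. The only difference is that you also supply a proof of (v) (class count $2 + \tfrac{1}{2}(32-2) = 17$, abelianization of order $16$, and the sum-of-squares relation forcing a unique four-dimensional irreducible), which the paper's proof omits as standard; your argument there is correct.
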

\begin{proof}
  (i) It is clear that the conjugacy class $\Ad G\{(g,h)\}$ containing
  an element $(g,h)$ is the product of conjugacy classes of $(g,1)$
  and $(1,h)$, i.e., $\Ad(Q_8)\{g\} \times_{\bZ/2} \Ad(D_8)\{h\}$. The
  statement follows from the fact that it holds for each of $Q_8$ and
  $D_8$.

  (ii,iv) The eigenvalues of $(g,h)$ are the four pairwise products of
  an eigenvalue of $g$ and an eigenvalue of $h$. In order for the
  result to contain one as an eigenvalue, therefore, $g$ and $h^{-1}$
  must share a common eigenvalue.  In this case, this can only happen
  if the eigenvalues of $g$ and $h$ are both $i$ and $-i$ (i.e., $g$
  and $h$ both have order four), or if $g = \pm \Id$ and $h \in
  \{\sigma, \sigma \rho, \sigma \rho^2, \sigma \rho^3\}$.

  (iii) Note that, if one of $g$ and $h$ has order four, but the other
  has order two, then $(g,h)^2 = -\Id$, so $(g,h)$ has order four as
  well.  So the description follows.
\end{proof}
\subsection{Outer automorphisms of $G$}
The material of this section will not be needed in the paper, but we
are including it to demonstrate the unique symmetry of $G$ (which,
along with properties already described, makes it appear somewhat
exceptional).
\begin{proposition}\label{p:outgs5}
\begin{enumerate}
\item[(i)] The permutation action of $\Out(G)$ on the conjugacy
  classes of symplectic reflections defines an isomorphism
\begin{equation}\label{e:outgs5}
\Out(G) \iso S_5.
\end{equation}
\item[(ii)] All of the outer automorphisms are obtainable by
  conjugation by elements of $\Sp_4(\bC)$.
\item[(iii)] This outer automorphism group is generated by the outer
  automorphism group of $D_8$ along with the conjugation action of
  $Q_8^2 \rtimes S_2$.
\end{enumerate}
\end{proposition}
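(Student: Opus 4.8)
The plan is to recognize $G$ as an extraspecial $2$-group and then read off everything from the known automorphism theory of such groups. First I would verify that $Z(G)=[G,G]=\{\pm\Id\}\cong\bZ/2$: by Proposition~\ref{p:Gfacts}(i) the only central elements are $\pm\Id$, and since $G$ has sixteen one-dimensional characters the commutator subgroup has order two. Thus $\bar G:=G/Z(G)$ is elementary abelian of rank four, a four-dimensional vector space over $\mathbf{F}_2$, and the squaring map $q(\bar g):=g^2\in Z(G)=\mathbf{F}_2$ is a well-defined quadratic form whose polarization is the commutator pairing $(\bar g,\bar h)\mapsto[g,h]$. Since $G=Q_8\times_{\bZ/2}D_8$ is the central product of $Q_8$ and $D_8$, the form $q$ is the orthogonal sum of the (minus-type) form of $Q_8$ and the (plus-type) form of $D_8$, hence nondegenerate of minus type; thus $G\cong 2^{1+4}_-$.

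For (i), being a symplectic reflection is intrinsic---by Proposition~\ref{p:Gfacts}(iii) these are exactly the noncentral involutions---so $\Aut(G)$ permutes the five classes of symplectic reflections while $\operatorname{Inn}(G)$ acts trivially, giving $\theta\colon\Out(G)\to S_5$. The noncentral involutions are precisely the preimages of the nonzero singular vectors of $q$ (those $\bar g\neq 0$ with $g^2=\Id$), of which there are exactly five, and by Proposition~\ref{p:Gfacts}(i) each singular line $\{g,-g\}$ is a single conjugacy class; hence $\theta$ is the action of $\Ort(q)$ on singular points. I would then invoke Winter's theorem on automorphisms of extraspecial $2$-groups, giving $\Out(G)\cong\Ort_4^-(\mathbf{F}_2)$, with the kernel of $\Aut(G)\to\Ort_4^-(\mathbf{F}_2)$ equal to $\operatorname{Inn}(G)$ (the analogous $\Out(Q_8)\cong\Ort_2^-(\mathbf{F}_2)\cong S_3$ and $\Out(D_8)\cong\Ort_2^+(\mathbf{F}_2)\cong\bZ/2$ are reassuring special cases). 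Because the five singular points span $\bar G$, the orthogonal group acts faithfully on them, so $\theta$ is injective; as $|\Ort_4^-(\mathbf{F}_2)|=120=|S_5|$, it is an isomorphism. This proves (i) and simultaneously records the exceptional isomorphism $\Ort_4^-(\mathbf{F}_2)\cong S_5$.

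For (ii), the representation $\bC^4$ is the unique irreducible of dimension greater than one and is faithful (Proposition~\ref{p:Gfacts}(v)); hence for $\phi\in\Aut(G)$ the twist $\bC^4\circ\phi$ is again irreducible of dimension four, so isomorphic to $\bC^4$, and any intertwiner $M_\phi\in\GL_4(\bC)$ satisfies $\phi=\Ad(M_\phi)|_G$. Since the $G$-invariant symplectic form on $\bC^4$ is unique up to scalar, $M_\phi$ preserves it up to scalar, and rescaling places $M_\phi\in\Sp_4(\bC)$; thus every outer automorphism is realized by $\Sp_4(\bC)$-conjugation. For (iii), I would compute the images in $S_5$ of the two asserted families. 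The outer automorphism of $D_8$ interchanges its two classes of reflections, hence transposes $\{\pm(\Id,\sigma)\}$ and $\{\pm(\Id,\sigma\rho)\}$ and fixes the other three. For the conjugation action of $Q_8^2\rtimes S_2$, I would check that it normalizes $G$ with centralizer $Z(G)$, so its image in $\Out(G)$ has order $[Q_8^2\rtimes S_2:G]=4$; translating the five classes through the realization $G=\{(\pm g)\oplus g,\ ((\pm g)\oplus g)\sigma\}$ and conjugating explicitly by the coset representatives $(I,\Id),(J,\Id),(K,\Id)$ identifies this image with the Klein four-group $\{e,(1\,5)(2\,3),(2\,5)(1\,3),(3\,5)(1\,2)\}$, in a labeling making $4$ the class $\{\pm(\Id,\sigma)\}$. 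Together with the transposition $(4\,5)$ above, these generate a transitive subgroup of $S_5$ containing a transposition, which is necessarily all of $S_5$; this gives (iii).

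The main obstacle is the explicit computation in (iii): one must determine, through the wreath-product realization, exactly which permutation of the five classes each conjugation induces. This is delicate precisely because the classes do not split as $\{\pm(I,\rho)\},\{\pm(J,\rho)\},\{\pm(K,\rho)\}$ against $\{\pm(\Id,\sigma)\},\{\pm(\Id,\sigma\rho)\}$ in the wreath picture---the class $\{\pm(\Id,\sigma\rho)\}$ is of ``swap type'' and is mixed by $Q_8^2$-conjugation with the three $I,J,K$ classes---so that genuinely mixing permutations such as $(1\,5)(2\,3)$ arise, which are invisible from the $Q_8\times_{\bZ/2}D_8$ description alone. Everything else reduces to standard facts about extraspecial groups and the representation theory already summarized in Proposition~\ref{p:Gfacts}.
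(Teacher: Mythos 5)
Your proposal is correct in substance, but it follows a genuinely different route for part (i) and contains one concrete computational slip in part (iii), which happens not to damage the argument.

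For (i), the paper never mentions extraspecial groups. It constructs two explicit subgroups of $\Out(G)$ --- $\Out(Q_8)\times\Out(D_8)\cong S_3\times S_2$ seen from the central-product realization, and a Klein four-group coming from conjugation by $Q_8^2\rtimes S_2$ --- checks that their images in $S_5$ generate, and then proves injectivity of $\Out(G)\to S_5$ by an elementary count: an automorphism fixing all five reflection classes is determined by its effect on each class, giving at most $2^5$ possibilities, the relation that any four classes generate the fifth cuts this to $16=|G/Z(G)|$, and inner automorphisms already account for all $16$. You instead identify $G\cong 2^{1+4}_-$, quote Winter's theorem to get $\Out(G)\cong\Ort_4^-(\mathbf{F}_2)$ with kernel $\operatorname{Inn}(G)$, and finish by faithfulness on the five singular points plus the order count $|\Ort_4^-(\mathbf{F}_2)|=120=|S_5|$. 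Your route is more structural: it explains where $S_5$ comes from (the exceptional isomorphism $\Ort_4^-(\mathbf{F}_2)\cong S_5$) and gets surjectivity for free, at the price of invoking a nontrivial external theorem; the paper's route is self-contained and, as a by-product, manufactures exactly the generators needed for (iii). Your proof of (ii) (uniqueness of the four-dimensional irreducible, plus uniqueness of the invariant bilinear form up to scalar and a rescaling of the intertwiner) is essentially the argument the paper gives in the Remark following its proof, rather than the paper's main proof, which instead realizes the $\Out(D_8)$-generator by conjugation by a square root of $\rho$ inside $\Ort_2(\bC)$.

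The slip in (iii): you have the two $D_8$-type classes interchanged. With the paper's matrices, $\sigma\rho=\operatorname{diag}(1,-1)$, so under $\bC^2\otimes\bC^2\cong\bC^2\oplus\bC^2$ the element $(\Id,\sigma\rho)$ becomes the block-diagonal matrix $\Id\oplus(-\Id)$; thus $\{\pm(\Id,\sigma\rho)\}$ is the class of noncentral \emph{diagonal} matrices, and it is \emph{fixed} by conjugation by $Q_8^2\rtimes S_2$ (the paper states this explicitly). It is $(\Id,\sigma)=(\Id\oplus\Id)\sigma$ that is of swap type and gets mixed with the $I,J,K$ classes: for instance, conjugating by $I\oplus\Id$ sends $(\Id\oplus\Id)\sigma$ to $(I\oplus(-I))\sigma=(-I,\rho)$, and sends $((-J)\oplus J)\sigma=(J,\rho)$ to $((-K)\oplus K)\sigma=(K,\rho)$, so in your labeling ($1,2,3,4,5$ for the $I,J,K,\sigma,\sigma\rho$ classes) this conjugation induces $(1\,4)(2\,3)$. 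Hence the Klein four-group is $\{e,(1\,4)(2\,3),(2\,4)(1\,3),(3\,4)(1\,2)\}$, fixing label $5$, not your $\{e,(1\,5)(2\,3),(2\,5)(1\,3),(3\,5)(1\,2)\}$ fixing label $4$; your closing remark that $\{\pm(\Id,\sigma\rho)\}$ is of ``swap type'' is exactly backwards.

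Fortunately this is immaterial to the conclusion: either Klein four-group acts regularly on the four labels it moves, and adjoining the transposition $(4\,5)$ from $\Out(D_8)$ yields a transitive subgroup of $S_5$ containing a transposition, hence all of $S_5$ (your appeal to this fact is legitimate at prime degree: a transitive group of degree $5$ is primitive, and a primitive group containing a transposition is the full symmetric group). So your proof of (iii) stands after swapping the roles of the labels $4$ and $5$ throughout.
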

\begin{proof} 
 (i) In the realization $G = Q_8 \times_{\bZ} D_8$, one sees the
 subgroup of the outer automorphism subgroup $\Out(Q_8) \times
 \Out(D_8) < \Out(G)$ of order $12$.  On the other hand, in the
 realization $G < Q_8^2 \rtimes S_2$, one sees the subgroup of outer
 automorphisms coming from conjugation by the larger group. Since
 $\bC^4$ is an irreducible representation of $G$, the centralizer of
 $G$ in $Q_8^2 \rtimes S_2$ is only the scalar matrices $\pm \Id$, so
 in this way one obtains the subgroup of outer automorphisms of order
 $4$ (in particular, it is $\bZ/2 \times \bZ/2$).

  We claim that these two groups do not intersect nontrivially, and
  their permutation actions on conjugacy classes of symplectic
  reflections generate all of $S_5$. To see this, note first that, in
  the realization $G = Q_8 \times_{\bZ/2} D_8$, the outer automorphism
  subgroup $\Out(Q_8) \times \Out(D_8)$ preserves the partition of symplectic reflection conjugacy
  classes into the cells
\[
\{\{\pm(I,\rho)\},\{\pm(J,\rho)\}, \{\pm(K,\rho)\}\}, \text{ and }
  \{\{\pm(\Id,\sigma)\},\{\pm \Id, \sigma\rho\}\}.
\]
 In fact, this
  produces an isomorphism
\[
\Out(Q_8) \times \Out(D_8) \iso S_3 \times S_2 < S_5,
\] 
by permutations of symplectic reflection conjugacy classes.

On the other hand, in the realization $G < Q_8^2 \rtimes S_2$, the
outer automorphism subgroup coming from the conjugation action of
$Q_8^2 \rtimes S_2$ preserves the partition of symplectic reflection
conjugacy classes into the cells
$$
\{\{\pm(I,\rho)\},\{\pm(J,\rho)\}, \{\pm(K,\rho)\}, \{\pm(\Id,
  \sigma)\}\}, \textrm{ and } \{\{\pm(\Id, \sigma \rho)\}\}
$$
(note that the last conjugacy class is the one consisting of the
noncentral diagonal matrices). This produces an isomorphism
  \[
\Ad(Q_8^2 \rtimes S_2) / \Ad(G) \iso \bZ/2 \times \bZ/2 < S_4 < S_5,
\]
again by permuting the symplectic reflection conjugacy classes.

It is then clear that the above two groups generate all of $S_5$.

To prove the assertion, it remains to show that one obtains from this
an isomorphism $\Out(G) \to S_5$, by permutating the symplectic
reflection conjugacy classes.

First, we have to explain why all outer automorphisms preserve the
conjugacy classes of symplectic reflections.  This follows because the
symplectic reflections are exactly the noncentral involutions
(Proposition \ref{p:Gfacts}.(iii)). Alternatively, since the defining
representation $\bC^4$ of $G$ is the unique four-dimensional
irreducible representation, any outer automorphism must be obtained by
conjugation by an element of $\GL_4(\bC)$, so that the symplectic
reflections (elements $g$ such that $(\bC^4)^g$ is two-dimensional)
must be preserved.

Hence, the above yields a well defined epimorphism $\Out(G) \onto
S_5$.  It remains to show that this is injective, i.e., the kernel of
$\Aut(G) \to S_5$ is the inner automorphism group. It is clear that
the inner automorphism group is contained in the kernel, so we only
have to show it equals the kernel.  Any automorphism which fixes all
the symplectic reflection conjugacy classes is determined by how it
acts on each of the classes (since $G$ is generated by symplectic
reflections).  There can be at most $32$ of these, and it suffices to
show there are only $16 = |G/Z(G)|$ of them. However, any four of
these conjugacy classes generates the fifth, which implies that there
can be at most $16$. Hence there are exactly $16$ and the kernel of
$\Aut(G) \onto S_5$ is the inner automorphism group, as desired.

(iii) This follows from the proof of (i): we pointed out that all of
the mentioned elements generate the whole outer automorphism group
$S_5$. But more precisely, we did not actually need the outer
automorphism group of $Q_8$: the outer automorphism group of $D_8$
provides the transposition in $S_5$, and this together with the
order-four subgroup of $S_4 < S_5$ (where $S_4$ does not contain the
aforementioned transposition) generates all of $S_5$.

(ii) This follows from (iii) if we can just show that the nontrivial
element of $\Out(D_8) \cong \bZ/2$, as a subgroup of $\Out(G)$, is
obtainable by conjugation by an element of $\Sp_4(\bC)$ (note that
$Q_8^2 \rtimes S_2 < \Sp_4(\bC)$, which proves that the conjugation
action of the latter is by symplectic transformations).  This element
is the automorphism $\sigma \mapsto \sigma \rho, \rho \mapsto \rho$,
of order four as an honest automorphism (as an outer automorphism it has order two).  It suffices to show that this automorphism of
$D_8$ is given by conjugation by an element of $\Ort_2(\bC)$. This can
be done by conjugating by any square root of $\rho$, which is indeed
orthogonal.
\end{proof}
\begin{remark}
Alternatively, to show that the automorphism of $D_8$ is given by conjugation
by an element of $\Ort_2(\bC$),
one can argue that, since $\bC^2$ is the unique
irreducible representation of $D_8$ of dimension $2$, the outer
automorphism is given by conjugating by some matrix, and this can be
taken to be orthogonal since it can be taken to be real (there is only
one real irreducible representation of dimension two).

A similar argument applied to $G$ yields a proof of all of part (ii):
the irreducible representation $\bC^4$ is the unique one of dimension
four, so any outer automorphism is obtained by conjugation by some
element of $\GL_4(\bC)$. In fact, this is the unique irreducible
\emph{symplectic} representation of dimension four, since all the
other irreducible representations of $G$ are one-dimensional and
extend to two-dimensional irreducible symplectic representations (the
symplectic representation theory of any finite group is completely
reducible just like the ordinary representation theory). Thus, any
outer automorphism must be given by conjugating by an
element of $\Sp_4(\bC)$.
\end{remark}

\section{Proof of Theorem \ref{t:main2}}\label{s:proof}
\subsection{Recollections on symplectic reflection algebras
 (following \cite{EGsra})} \label{ss:rsra}
Let $(V,\omega)$ be a symplectic vector space. Recall that a
\emph{symplectic reflection} is an element $s \in \Sp(V)$ such that
$\rk(s - \Id) = 2$, i.e., $V^s \subseteq V$ is a codimension-two
subspace, which we call the reflecting hyperplane of $s$.  The restriction of $\omega$ to $V^s$ is nondegenerate, so $V = V^s \oplus (V^s)^\perp$.  Let
$\pi_{(V^s)^\perp}: V \onto (V^s)^\perp$ be the orthogonal (with
respect to $\omega$) projection.  Define the (degenerate on $V$) form
\[
\omega_s: V \otimes V \to \bC, \quad \omega_s(v,w) =
\omega(\pi_{(V^s)^\perp}(v), \pi_{(V^s)^\perp}(w)).
\]
Now, let $G < \Sp(V)$ be a finite subgroup.  Let $\cS \subseteq G$ be
the subset of symplectic reflections.  Let $\msC = \bC[\cS]^G$ denote the set
of conjugation-invariant functions on $\cS$.  For every $c \in \msC$
and $t \in \bC$, define the \emph{symplectic reflection algebra}
\[
H_{c,t}(G) := T V \rtimes G / (v \cdot w - w \cdot v - t\omega(v,w) -
\sum_{s \in S} c(s) \omega_s(v,w)),
\]
where $TV$ is the tensor algebra on $V$ (with multiplication
$\cdot$). As in the introduction, let $e \in \bC[G]$ be the
symmetrizer element $e := \frac{1}{|G|} \sum_{g \in G} g$, and define
the spherical symplectic reflection algebra as $e H_{c,t}(G) e$.

We will be interested in the case $t=0$, and will use the notation
$H_c(G) := H_{c,0}(G)$. In this case, it is a well known result of
\cite{EGsra} that $eH_c(G)e$ is commutative and is in fact isomorphic to
the center of $H_c(G)$. Therefore, we call $e H_c(G) e$ a commutative spherical symplectic reflection algebra.

\subsection{Proof of Theorem \ref{t:main2}} 

As before, set $G := Q_8 \times_{\bZ/2} D_8$.  We will prove in the
next subsection the following more precise result, using Proposition
\ref{p:exp-ann}:
\begin{proposition}\label{p:hyp-eqns}
  Let $\chi$ be the $G$-character of a finite-dimensional
  representation of $H_c(G)$. Then the following equations:
\begin{equation}\label{e:hyp-eqns1}
\sum_{s \in \cS} \chi(s) \cdot c(s) = 0,
\end{equation}
and, for all $g \in \cS$:
\begin{equation}\label{e:hyp-eqns2}
2 \chi(- \Id)c(g) + \sum_{s \in \cS \setminus \{ g,-g \}} \chi(gs) \cdot c(s) = 0.
\end{equation}
are satisfied by $\chi$.
\end{proposition}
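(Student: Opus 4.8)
The plan is to feed the relations furnished by Proposition \ref{p:exp-ann} into the explicit commutator formula for $H_c(G)$ and then exploit the special structure of the conjugacy classes of $G$. In $H_c(G)$ (with $t=0$) the defining relation reads $[x,y] = \sum_{s\in\cS} c(s)\,\omega_s(x,y)\,s$, an element of $\bC[G]$. Hence, applying Proposition \ref{p:exp-ann}, for every $g \in G$, every $x \in V^g$ and every $y \in V$ we obtain the master relation
\[
\sum_{s\in\cS} c(s)\,\omega_s(x,y)\,\chi(gs) = 0.
\]
Both equations of the proposition will be specializations of this, once we understand the forms $\omega_s$.

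The key computational input is that, by Proposition \ref{p:Gfacts}(iii), every $s \in \cS$ is an involution, so $s$ acts as $+\Id$ on $V^s$ and as $-\Id$ on $(V^s)^\perp$; therefore the orthogonal projection is $\pi_{(V^s)^\perp} = \tfrac12(\Id - s)$, while $\pi_{(V^{-s})^\perp} = \tfrac12(\Id + s)$. Expanding $\omega_s(v,w) = \tfrac14\omega((\Id-s)v,(\Id-s)w)$ together with the analogous expression for $-s$, and using $s \in \Sp(V)$ to cancel the cross terms, one obtains the clean identity $\omega_s(v,w) + \omega_{-s}(v,w) = \omega(v,w)$, valid for every $s \in \cS$ and all $v,w$. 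This identity, together with the fact (Proposition \ref{p:Gfacts}(i)) that the nontrivial conjugacy classes all have the form $\{\pm(g,h)\}$ --- so that $c$ and $\chi$ take equal values on $s$ and $-s$ --- is what drives both equations.

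For \eqref{e:hyp-eqns1} I would take $g = \Id$, so that $x,y$ range over all of $V$. Grouping the master relation by conjugacy classes and pairing $s$ with $-s$, each class contributes $c(s)\chi(s)\bigl(\omega_s(x,y)+\omega_{-s}(x,y)\bigr) = c(s)\chi(s)\,\omega(x,y)$. Choosing $x,y$ with $\omega(x,y)\neq 0$ and dividing out yields $\sum_{\text{classes}} c(s)\chi(s) = 0$, equivalently \eqref{e:hyp-eqns1}. For \eqref{e:hyp-eqns2} I would fix a symplectic reflection $g$ and choose $x \in V^g \setminus \{0\}$ together with $y$ satisfying $\omega(x,y)\neq 0$. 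In the master relation the $s=g$ term vanishes, since $(\Id - g)x = 0$ forces $\omega_g(x,y) = 0$, while the $s=-g$ term survives: there $gs = -\Id$ and $\omega_{-g}(x,y) = \omega(x,y) - \omega_g(x,y) = \omega(x,y)$, contributing $c(g)\chi(-\Id)\,\omega(x,y)$. Every other class $\{s,-s\}$ contributes $c(s)\chi(gs)\,\omega(x,y)$ after invoking $\chi(gs)=\chi(-gs)$. Dividing by $\omega(x,y)$ and re-expanding the per-class sums into sums over elements of $\cS$ produces exactly \eqref{e:hyp-eqns2}, including the coefficient $2$ on $c(g)\chi(-\Id)$.

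The only genuinely delicate points are the bookkeeping of the factors of two produced by the $s \leftrightarrow -s$ pairing, and the justification of $\chi(gs)=\chi(-gs)$: this requires $gs$ to be noncentral for $s \in \cS\setminus\{g,-g\}$, which holds because $gs = \pm\Id$ would force $s = g$ or $s = -g$. Once these are in place the argument is a direct, if careful, specialization of the master relation.
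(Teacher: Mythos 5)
Your proof is correct and follows essentially the same route as the paper's: both expand $\chi(g[x,y])=0$ from Proposition \ref{p:exp-ann} via the defining relation, use the identity $\omega_s+\omega_{-s}=\omega$ (valid since $s^2=\Id$ gives $(V^s)^\perp=V^{-s}$), and pair each class $\{s,-s\}$, specializing to $g=\Id$ for \eqref{e:hyp-eqns1} and to $g\in\cS$ with $x\in V^g$ (so $\omega_g(x,y)=0$, $\omega_{-g}(x,y)=\omega(x,y)$) for \eqref{e:hyp-eqns2}. The only differences are cosmetic: you derive the key identity from the explicit projections $\pi_{(V^s)^\perp}=\tfrac12(\Id-s)$ and divide by $\omega(x,y)$ instead of normalizing $\omega(x,y)=2$, and your justification that $\chi(gs)=\chi(-gs)$ matches the paper's observation that $g\cdot S$ is again a conjugacy class.
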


In view of Proposition \ref{p:cssra-smooth}.(iii), we immediately
conclude
\begin{corollary}\label{c:hyp-eqns}
  If equations \eqref{e:hyp-eqns1} and \eqref{e:hyp-eqns2} are not
  satisfied for any character $\chi$ of a representation of dimension
  less than $|G|$, then $e H_c(G) e$ is smooth.
\end{corollary}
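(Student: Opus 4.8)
The plan is to argue by contraposition, combining the two results just stated. First I would recall that, by Proposition \ref{p:cssra-smooth}, smoothness of $eH_c(G)e$ is equivalent to condition (iii), namely that $H_c(G)$ has no irreducible representation of dimension strictly less than $|G|$. Thus it suffices to show that a failure of smoothness forces the existence of a small-dimensional representation whose $G$-character satisfies the two systems of equations.

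Concretely, suppose $eH_c(G)e$ is \emph{not} smooth. Then condition (iii) fails, so there is an irreducible representation $\rho: H_c(G) \to \End(U)$ with $\dim U < |G|$. Let $\chi = \tr \circ \rho|_{\bC[G]}$ be its $G$-character. By Proposition \ref{p:hyp-eqns}, every character of a finite-dimensional representation of $H_c(G)$ satisfies equations \eqref{e:hyp-eqns1} and \eqref{e:hyp-eqns2}, and in particular this $\chi$ does. Hence we would have exhibited a character $\chi$ of a representation of dimension less than $|G|$ for which the equations \emph{are} satisfied, contradicting the hypothesis of the corollary. Therefore $eH_c(G)e$ must be smooth.

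The deduction itself is essentially formal: all the mathematical content is already packaged into Proposition \ref{p:cssra-smooth}.(iii) (the equivalence between smoothness and the nonexistence of small irreducibles) and into Proposition \ref{p:hyp-eqns} (the trace identities forced on any finite-dimensional representation by the vanishing of $\tr \rho([x,y])$ for $x,y \in V$). The only point requiring attention is the logical bookkeeping around quantifiers: Proposition \ref{p:hyp-eqns} applies to \emph{arbitrary} finite-dimensional representations, so it applies to the hypothetical small irreducible automatically, and no compatibility conditions need to be checked. Consequently I expect no genuine obstacle in the corollary itself; the substantive work lies upstream in establishing Proposition \ref{p:hyp-eqns}, where the explicit elements $g[x,y] \in \bC[G]$ must be computed, and that is where I would focus my effort next.
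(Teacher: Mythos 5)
Your proof is correct and is exactly the paper's argument: the corollary is stated there as an immediate consequence of Proposition \ref{p:cssra-smooth}.(iii) together with Proposition \ref{p:hyp-eqns}, which is precisely the contrapositive deduction you spell out. The quantifier bookkeeping you note (an irreducible representation of dimension less than $|G|$ is in particular a representation of dimension less than $|G|$, so its character falls under the hypothesis) is the only point of substance, and you handle it correctly.
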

We may conclude from this Theorem \ref{t:main2}:
\begin{proof}[Proof of Theorem \ref{t:main2}]
  First note that, by Proposition \ref{p:Gfacts}, for all $h \in G$,
  either $h \in \cS$, or $h = gs$ for some $g, s \in \cS$ (and if $h \neq
  \Id$, then $g \neq s$; recall $s = s^{-1}$ for all $s \in \cS$).
  Therefore, at least one of equations
  \eqref{e:hyp-eqns1}-\eqref{e:hyp-eqns2} is non-trivial unless
  $\chi(h) = 0$ for all $h \neq \Id$, i.e., $\chi$ is a multiple of
  the regular character. As a result, \eqref{e:hyp-eqns1} and
  \eqref{e:hyp-eqns2} define proper linear subspaces of $\bC[\cS]^G$ as
  $\chi$ ranges over all characters of finite-dimensional
  representations of dimension less than $|G|$.  Hence, for $c$ not in
  any of these finitely many proper linear spaces (and in particular
  for generic $c$), Corollary \ref{c:hyp-eqns} implies that $e H_c(G)
  e$ is smooth.
\end{proof}
We remark that the equations \eqref{e:hyp-eqns1} and
\eqref{e:hyp-eqns2} for $c$ have integer coefficients since all
characters of $G$ are integer-valued (and characters of
representations of dimension $< |G|$ are valued in integers of
absolute value less than $|G|$).
\subsection{Proof of Proposition \ref{p:hyp-eqns}}

If $\chi$ is the character of a representation of $H_c(G)$, then
Proposition \ref{p:exp-ann} implies that $\chi(g[x,y]) = 0$ whenever
$x \in V^g$ and $y \in V$.

Choose $x,y \in V$ such that $\omega(x,y) = 2$. 
For $s \in \mathcal{S}$, since $s^2 = \Id$, we conclude that $(V^s)^\perp =
V^{-s}$, and hence $\omega_s + \omega_{-s} = \omega$. Since also the conjugacy class of $s$ is $\{s, -s\}$, we conclude that 
\[
\chi([x,y])=\sum_{s \in \cS} \omega_s(x,y) c(s) \chi(s) = 
\frac{1}{2} \omega(x,y) \sum_{s \in \cS} c(s) \chi(s) = 0,
\]
which
equals \eqref{e:hyp-eqns1}.

Next, fix $g \in \cS$. Then, $V^g \neq 0$. Let $x \in V^g$ and $y \in V$
be such that $\omega(x,y) = 2$.  Then,
\begin{equation}\label{e:gs}
\chi(g[x,y]) = \sum_{s \in \cS} \omega_s(x,y) c(s) \chi(gs).
\end{equation}
Let $S \subseteq \mathcal{S}$ be a conjugacy class of symplectic reflections.
If $g \notin S$ then Proposition \ref{p:Gfacts}.(i) implies that $g
\cdot S$ is again a conjugacy class in $G$. Hence
\[
\omega_s(x,y) c(s) \chi(gs) + \omega_{-s}(x,y) c(-s) \chi(-gs) =
\omega(x,y) c(s) \chi(gs) = c(s) \chi(gs) + c(-s)
\chi(-gs).
\]
If, on the other hand, $g \in S$, then $S = \{ g, -g \}$, and our
choice of $x$ implies that $\omega_g(x,y) = 0$. Therefore
$\omega_{-g}(x,y) = \omega(x,y)$ and hence
$$
\omega_{g}(x,y) c(g) \chi(g^2) + \omega_{-g}(x,y) c(-g) \chi(-g^2) =
2c(-g) \chi(- \Id).
$$
Put together, \eqref{e:gs} becomes
\[
\chi(g[x,y]) = 2c(-g) \chi(- \Id) + 
\sum_{s \in \cS \setminus \{g,-g\}} c(s) \chi(gs),
\]
implying \eqref{e:hyp-eqns2}.
 
\section{The singular locus of $e H_c(G) e$}\label{s:main3}
It turns out to be possible to completely characterize the locus of $c
\in \msC$ such that $e H_c(G) e$ is singular, generalizing Theorem
\ref{t:main2} (see Theorem \ref{t:main3} below).  Before we do this,
we recall some elementary facts about symplectic leaves, which are not
strictly needed for the theorem, but which we will use to describe in
more detail the singularities of those commutative spherical
symplectic reflection algebras that are singular.

\subsection{Recollections on symplectic leaves} \label{ss:symleaves}
Recall that an (algebraic) symplectic leaf of an affine Poisson
variety $X$ is a (Zariski) locally closed and connected smooth
subvariety $Y$ such that the tangent space $T_y Y$ at each point $y
\in Y$ is spanned by Hamiltonian vector fields, $\xi_f := \{f, -\}$,
for $f \in \bC[X]$.  The symplectic leaves are all symplectic manifolds
(with Poisson structure obtained from the Poisson structure on $X$),
and are in particular even-dimensional.  When a Poisson variety $X$ is
a union of finitely many (necessarily disjoint) symplectic leaves,
then this decomposition is unique. Moreover, the singular locus of $X$
is exactly the union of those leaves that are not open in $X$ (i.e.,
the positive-codimension leaves when $X$ is irreducible). (We remark
that this property of being a finite union of symplectic leaves is, in
general, a strong condition, which was studied in, e.g.,
\cite{Kalss,ESdm}; note that it is always satisfied for varieties admitting
a symplectic resolution.)

For every finite subgroup $G< \Sp(V)$, the Poisson variety $\Spec
\bC[V^*]^G = V^* / G$ is a union of finitely many symplectic leaves,
which are the $G$-orbits of the parabolic subspaces $V^K \subseteq V$
for subgroups $K < G$ (see, e.g., \cite[Proposition
7.4]{BrownGordon}).  Thus, for any filtered Poisson deformation $A$ of
$\bC[V^*]^G$, it is also true that $\Spec A$ has finitely many
symplectic leaves: for each $i \geq 0$, the union of the $\leq
2i$-dimensional leaves corresponds to a Poisson ideal $J \subseteq A$
whose associated graded Poisson ideal $\gr(J)$ can only vanish on
$\leq 2i$-dimensional leaves of $V^*/G$.  In particular, since
$\gr(J)$ is $\leq 2i$-dimensional, so is $J$, and hence there can only
be finitely many $2i$-dimensional symplectic leaves of $\Spec A$.

Therefore, in our situation where $V = \bC^4$, describing the
singularities of each commutative spherical symplectic reflection
algebra deforming $\bC[V^*]^G$ is equivalent to
determining all two-dimensional and all zero-dimensional symplectic
leaves.  

Below, for our group $G = Q_8 \times_{\bZ/2} D_8$, in addition to
describing completely the set of parameters $c \in \mathsf{C}$ for
which the corresponding algebra $e H_c(G) e$ is smooth (which by
Theorem \ref{t:main2} forms an open subvariety of the parameter
space), we will describe (and enumerate) all two-dimensional
symplectic leaves of all commutative spherical symplectic reflection
algebras (of which there are at most five, the maximum being obtained
exactly for $\bC[V^*]^G$ itself), and also give a bound (ten) on the
number of zero-dimensional symplectic leaves of these algebras.
\subsection{The parameters $c$ for which $eH_c(G) e$ is singular}
\begin{theorem}\label{t:main3}
  The locus of $c \in \msC$ such that $e H_c(G) e$ is singular is
  precisely the union of the following twenty-one hyperplanes:
\begin{itemize}
\item[(i)] The sixteen of the form $\sum_{s \in \cS} \chi(s) \cdot c(s)
  = 0$, where $\chi$ is a one-dimensional character (i.e., $\chi(s) =
  \pm 1$ for all $s$, $\chi(-\Id)=1$, 
  and $\prod_{i=1}^5 \chi(s_i) = 1$ for a choice
  of representatives $s_i$ of the conjugacy classes of $\cS$);
\item[(ii)] The five of the form $c(s) = 0$ for some $s \in \cS$
  (equivalently, $c(-s)=0$).
\end{itemize}
In the case of type (i), $H_c(G)$ admits the one-dimensional
representation $\chi$ with trivial action of $V^*$. In the case of
type (ii), $H_c(G)$ admits two two-dimensional families of
sixteen-dimensional irreducible representations whose $G$-structures
are isomorphic to $\Ind_{\{s, 1\}}^G \mathbf{1}$ and $\Ind_{\{ s , 1
  \}}^G \mathbf{sgn}$, respectively.
\end{theorem}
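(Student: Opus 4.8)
The plan is to prove both inclusions between the singular locus of $\Spec eH_c(G)e$ and the union of the twenty-one hyperplanes, using Proposition \ref{p:cssra-smooth}: $eH_c(G)e$ is singular if and only if $H_c(G)$ has an irreducible representation whose restriction to $G$ is a proper subrepresentation of the regular representation, equivalently an irreducible representation of dimension strictly less than $|G| = 32$. Thus I must (a) produce such a representation on each of the twenty-one hyperplanes, and (b) show that off all twenty-one hyperplanes no such representation exists. The two families named in the statement are precisely the witnesses for (a), while Proposition \ref{p:hyp-eqns} supplies the constraints driving (b).

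\textbf{The constructions (a).} For type (i) I would take the one-dimensional character $\chi$ and let $V^* \subset H_c(G)$ act by zero; the only relation to check is $[v,w] = \sum_{s} c(s)\omega_s(v,w)\,s$, and exactly as in the proof of Proposition \ref{p:hyp-eqns} (using $\omega_s + \omega_{-s} = \omega$ and $\chi(s) = \chi(-s)$) its image is $\tfrac12\omega(v,w)\sum_s \chi(s)c(s)$, which vanishes precisely on the hyperplane $\sum_{s\in\cS}\chi(s)c(s) = 0$. For type (ii), fix a symplectic reflection $s$ and set $K = \la s \ra \cong \bZ/2$, a parabolic subgroup with $V = V^s \oplus (V^s)^\perp$ and $\dim (V^s)^\perp = 2$. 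When $c(s) = 0$ the rank-one spherical algebra attached to $K$ on $(V^s)^\perp$ is the (singular) algebra of the $A_1$-singularity, admitting the two one-dimensional modules $\one$ and $\sgn$ with $(V^s)^\perp$ acting by zero. I would induce these to $H_c(G)$ by the standard induction functor, obtaining modules of $G$-structure $\Ind_{K}^G \one$ and $\Ind_{K}^G \sgn$, each of dimension $|G|/2 = 16$ and each varying in a two-parameter family indexed by the central character along $V^s$; generic members are irreducible, so $eH_c(G)e$ is singular whenever $c(s) = 0$.

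\textbf{The converse (b).} Let $U$ be irreducible with $\dim U < 32$ and $G$-character $\chi = \sum_v n_v \chi_v + n_{17}\chi_{17}$, where the $\chi_v$ are the sixteen linear characters indexed by their sign vectors $v = (\chi_v(s_1),\dots,\chi_v(s_5)) \in \{\pm1\}^5$ (all with $\prod_i v_i = 1$) and $\chi_{17} = \bC^2\boxtimes\bC^2$. I would first record that every product $s_j s_i$ of distinct reflections has vanishing trace on $\bC^2\boxtimes\bC^2$ while $\chi_{17}(-\Id) = -4$, and that $\chi_v$ is multiplicative; then Proposition \ref{p:hyp-eqns} becomes the system $\bigl(\sum_v n_v\, v v^{\mathsf T} - 4 n_{17}\,\Id\bigr)c = 0$ together with $\bigl(\sum_v n_v v\bigr)\cdot c = 0$. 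Using the tight-frame identity $\sum_v v v^{\mathsf T} = 16\,\Id$ over the sixteen even sign vectors, this reduces, with $S := \{v : n_v = 1\}$, to the eigenvector condition $B_S\,c = 4 n_{17}\, c$ for $B_S := \sum_{v \in S} v v^{\mathsf T}$, supplemented by $(\sum_{v\in S} v)\cdot c = 0$. A finite analysis of the subsets $S$ then shows that, for $c$ off all twenty-one hyperplanes (so $v\cdot c \ne 0$ for every $v$ and $c_i \ne 0$ for every $i$), the only admissible $(S, n_{17})$ are $S$ a singleton with $n_{17} = 0$, forcing $c$ onto a type-(i) hyperplane, or $S$ the set of even sign vectors with a prescribed value $v_j = \pm1$ and $n_{17} = 2$ (where $B_S = 8\,\Id$), forcing $c_j = 0$; hence no proper subrepresentation survives and $eH_c(G)e$ is smooth.

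\textbf{Main obstacle.} I expect the hard part to be the finite case analysis in (b): ruling out every remaining subset $S$ whose frame matrix $B_S$ admits $4 n_{17}$ as an eigenvalue with an eigenvector off the coordinate and sign hyperplanes, so that exactly twenty-one hyperplanes — and no spurious proper eigenspace of some $B_S$ — appear. Since the equations of Proposition \ref{p:hyp-eqns} are only \emph{necessary}, I would reinforce this with the symplectic-leaf picture of \S\ref{ss:symleaves}: irreducibles of dimension below $|G|$ are supported on the positive-codimension leaves, whose stabilizers are either the full-fixed point (giving the one-dimensional type-(i) modules) or the order-two parabolics $\la s\ra$ (giving the sixteen-dimensional type-(ii) modules), confirming that these are the only $G$-structures that can occur and matching the surviving $S$. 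The remaining routine points are the explicit character table and the verification that the induced sixteen-dimensional modules of (a) are irreducible over $H_c(G)$ and not merely over $G$.
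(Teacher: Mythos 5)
Your skeleton coincides with the paper's: smoothness is detected via Proposition \ref{p:cssra-smooth}, necessity of the hyperplane conditions via Proposition \ref{p:hyp-eqns}, type (i) sufficiency via the one-dimensional representations with $V^*$ acting by zero, and type (ii) sufficiency via the parabolic $\bZ/2 = \la s \ra$. Your reformulation of the constraints as the eigenvector problem $B_S c = 4 n_{17} c$ together with $\bigl(\sum_{v \in S} v\bigr)\cdot c = 0$ (using $\chi(-\Id) = |S| - 4 n_{17}$, the vanishing of $\chi_{17}$ on reflections and on products of reflections from distinct classes, and $\sum_v v v^{\mathsf T} = 16\,\Id$) is correct and is a genuinely cleaner way to organize the equations. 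But the heart of the converse --- ``a finite analysis of the subsets $S$ then shows\dots'' --- is exactly the content of the paper's Lemma \ref{lem:complemma}, which the authors verify by a Magma computation over all $5\cdot 2^{16}-1 = 327679$ proper subrepresentation characters. You assert the outcome of this analysis without performing it; with $2^{16}$ subsets $S$, five values of $n_{17}$, and no symmetry reduction supplied, this is not a routine hand check, and the paper itself resorted to a computer precisely here. Your stated conclusion is moreover not literally correct: many other pairs $(S, n_{17})$ admit nonzero solutions (e.g.\ $S = \{v,w\}$ with $n_{17}=0$ has the three-dimensional solution space $v \cdot c = w \cdot c = 0$); what must actually be proved is that for \emph{every} proper pair the entire solution space is contained in the union of the twenty-one hyperplanes. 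Nor does the leaf-theoretic ``reinforcement'' close this gap: Losev's Theorem \ref{t:Losev2} constrains the $G$-structure only of irreducibles supported on the two-dimensional leaves, whereas zero-dimensional leaves of $\Spec Z(H_c(G))$ --- which is exactly what the type (i) hyperplanes produce --- carry no a priori restriction on the $G$-structure of the modules they support, so the leaf picture cannot by itself rule out other characters at special values of $c$; the argument would be circular.

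A secondary gap is in your type (ii) construction. At $t=0$ there is no ``standard induction functor'' from $H_{c|_K}(K, (V^K)^\perp)$ to $H_c(G)$: the former is not a subalgebra of the latter in any evident way, since the commutation relations among elements of $V$ inside $H_c(G)$ involve all symplectic reflections of $G$, not just $s$. The existence of the two two-dimensional families of irreducible sixteen-dimensional modules with $G$-structures $\Ind_{\la s \ra}^G \mathbf{1}$ and $\Ind_{\la s \ra}^G \mathbf{sgn}$ is precisely what the paper extracts from Losev's Theorems \ref{t:Losev1} and \ref{t:Losev2}: a two-dimensional leaf labeled by $(\la s\ra)$ exists because $Z_0(\bZ_2,\bC^2)$ has a zero-dimensional leaf, and at each point $p$ of that leaf one has $H_c(G)_p \cong \mathrm{Mat}_{16}\bigl(H_0(\bZ_2,\bC^2)_q\bigr)$ with the equivalence sending a module $M$ to $\Ind_{\la s\ra}^G M$ as $G$-modules. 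Both the definition of your ``induced'' modules and their irreducibility require this machinery (or an equivalent substitute, such as the Bezrukavnikov--Etingof completion isomorphisms); they cannot be relegated to a ``routine point.'' The type (i) verification, by contrast, is fine and agrees with the paper.
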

\begin{proof}
  Choose $c \in \msC$ such that $e H_c(G) e$ is not regular. Then
  $Z(H_c(G))$ is also not regular and we can choose a closed point
  $\psi : Z(H_c(G)) \rightarrow \bC$ lying in the singular locus of
  $\Spec Z(H_c(G))$. By Proposition \ref{p:cssra-smooth}, there exists
  an irreducible representation whose $G$-character $\chi$ is a proper
  subrepresentation of the regular representation.
  Then the parameter $c$ satisfies equations (\ref{e:hyp-eqns1}) and (\ref{e:hyp-eqns2}) and Lemma
  \ref{lem:complemma} implies that $c$ must lie in one of the
  twenty-one hyperplanes in the statement of the theorem.

  Conversely, if we choose $c$ to lie in one of these twenty-one
  hyperplanes we must show that there exists a representation of
  $H_c(G)$ of dimension less than $|G|$. One can easily check the
  claim of part (i). Therefore we concentrate on part (ii) and assume
  that $c(s) = 0$ for some symplectic reflection $s$. Being a
  symplectic reflection, $\dim V^s = 2$. Let $P$ be the parabolic
  subgroup of $G$ that is the stabilizer of a generic point of
  $V^s$. Then $P = \langle s \rangle \simeq \bZ_2$: if $p \in P$ then
  $V^s \subseteq V^p$ and either $p$ is a symplectic reflection or $p
  = \Id$. However, if $p$ is a symplectic reflection not equal to $s$
  ($ = s^{-1}$) then $ps$ is neither $\Id$ nor a symplectic
  reflection. Now, consider the symplectic reflection algebra
  $H_{c|_P}(P,(V^P)^\perp)$ defined by $P$, the restriction $c|_P$ of
  $c$ to $P$, and the symplectic vector space $(V^P)^\perp \subseteq
  V$.  Since $c(s) = 0$, $H_{c|_P}(P,(V^P)^\perp) = H_0(\bZ_2,
  \bC^2)$. There exist (up to isomorphism) exactly two one-dimensional
  representations of $H_0(\bZ_2, \bC^2)$, which we denote by
  $L(\mathbf{1})$ and $L(\mathbf{sgn})$, which are isomorphic to the
  trivial and sign representations, respectively, as $\bZ/2$-modules,
  and have the trivial action of $\bC^2$.
  Part (ii) now follows from Losev's Theorems \ref{t:Losev1} and
  \ref{t:Losev2}. In particular, the fact that there are
  two-dimensional families of representations of $H_c(G)$ isomorphic
  as $G$-modules to $\Ind_{\{s, 1\}}^G \mathbf{1}$ and $\Ind_{\{ s , 1
    \}}^G \mathbf{sgn}$, respectively, follows from the fact that
  there is a two-dimensional leaf of $Z_c(G)$ labeled by $(P)$ such
  that, at each point of the leaf, there are irreducible $H_c(G)$-modules
  supported at that point isomorphic as $G$-modules to $\Ind_{\{s,
    1\}}^G \mathbf{1}$ and $\Ind_{\{ s , 1 \}}^G \mathbf{sgn}$.
\end{proof}

Theorem \ref{t:main3} carries the following remarkable consequence, which is not
otherwise obvious:
\begin{corollary}
If $c(s) \equiv 1$ is the constant function, then $e H_c(G) e$ is smooth. 
\end{corollary}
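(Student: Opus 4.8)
The plan is to read off the corollary directly from the explicit description of the singular locus in Theorem \ref{t:main3}. That theorem says $eH_c(G)e$ is singular exactly when $c$ lies on one of the twenty-one listed hyperplanes, so the algebra is smooth precisely when $c$ avoids all of them. I would therefore simply verify that the constant function $c \equiv 1$ lies on none of the twenty-one hyperplanes.

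First I would dispose of the five hyperplanes of type (ii), defined by $c(s) = 0$ for some $s \in \cS$. Since $c(s) = 1 \neq 0$ for every symplectic reflection $s$, the constant function clearly avoids all five.

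The substantive point concerns the sixteen hyperplanes of type (i), defined by $\sum_{s \in \cS} \chi(s)\, c(s) = 0$ for a one-dimensional character $\chi$ with $\chi(-\Id) = 1$. Setting $c \equiv 1$, the defining equation becomes $\sum_{s \in \cS} \chi(s) = 0$. Here I would use that each conjugacy class of symplectic reflections has the form $\{s, -s\}$ by Proposition \ref{p:Gfacts}.(i), together with $\chi(-\Id) = 1$, to get $\chi(-s) = \chi(-\Id)\chi(s) = \chi(s)$; hence $\sum_{s \in \cS} \chi(s) = 2\sum_{i=1}^5 \chi(s_i)$, where $s_1, \dots, s_5$ are representatives of the five classes.

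The key observation---really the only content beyond bookkeeping---is a parity argument. Each $\chi(s_i) = \pm 1$, so $\sum_{i=1}^5 \chi(s_i)$ is a sum of five terms each $\pm 1$; if $k$ of them equal $+1$ it equals $2k - 5$, which is odd and hence nonzero. (This uses only that the number of conjugacy classes of symplectic reflections is odd, and is independent of the constraint $\prod_{i=1}^5 \chi(s_i) = 1$.) Thus $\sum_{s \in \cS} \chi(s) \neq 0$, so $c \equiv 1$ avoids every type-(i) hyperplane as well, and Theorem \ref{t:main3} then yields that $eH_c(G)e$ is smooth. I do not expect a genuine obstacle here: the proof is immediate once one notices the oddness of the number of symplectic reflection conjugacy classes, which is precisely what makes the constant parameter fall off all sixteen of the type-(i) hyperplanes.
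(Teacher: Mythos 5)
Your proof is correct and follows essentially the same route as the paper: the type (ii) hyperplanes are avoided trivially since $c(s)=1\neq 0$, and the type (i) hyperplanes are avoided by the parity argument that a signed sum of an odd number (five) of terms $\pm 1$ cannot vanish. The paper phrases this as the number of occurrences of $+1$ among the values $\chi(s)$, $s \in \cS$, being unequal to the number of occurrences of $-1$, which is the same observation you make (your explicit reduction $\chi(-s)=\chi(s)$ is implicit in the paper via $\chi$ being a class function on the classes $\{s,-s\}$).
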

\begin{proof}
  It is evident that $c(s) \equiv 1$ is not contained in any of the
  hyperplanes of type (ii) from Theorem \ref{t:main3}.  Also, since
  there are an odd number (5) of conjugacy classes of symplectic
  reflections $s \in \cS$, for every one-dimensional character $\chi$ of
  $G$, the number of occurrences of $+1$ among the values $\chi(s), s
  \in \cS$ is not equal to the number of occurrences of $-1$ (and these
  are the only values that occur, since $s^2 = \Id$ for all $s \in
  \cS$).  Hence, the constant function $c(s) \equiv 1$ is not contained in any
  hyperplanes of type (i). Thus, the result follows from Theorem
  \ref{t:main3}.
\end{proof}

The following lemma, which is required in the proof of Theorem \ref{t:main3}, is verified by computer\footnote{The Magma \cite{magma} code used to verify Lemma \ref{lem:complemma} can be obtained by emailing the authors.}. 

\begin{lemma}\label{lem:complemma}
  Let $\chi$ be the character of a $G$-submodule of the regular
  representation. Then the subspace of $\msC$ defined by equations
  (\ref{e:hyp-eqns1}) and (\ref{e:hyp-eqns2}) is contained in one of
  the twenty-one hyperplanes described in Theorem \ref{t:main3}.
\end{lemma}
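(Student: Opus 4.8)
The plan is to turn the statement into a finite linear-algebra verification and to reduce the number of cases as far as possible before invoking the computer. Since $G$ has exactly the seventeen irreducible representations recorded in Proposition~\ref{p:Gfacts}.(v), every $G$-submodule $U$ of the regular representation is determined by a subset $T$ of the sixteen one-dimensional characters (each occurs once in $\bC[G]$) together with the multiplicity $m\in\{0,1,2,3,4\}$ of the four-dimensional representation $\bC^4$; its character is $\chi=\sum_{\psi\in T}\psi+m\,\chi_{\bC^4}$. We may restrict to proper submodules $U\subsetneq\bC[G]$, since the regular representation itself makes both equations vacuous (indeed it is the only submodule whose character vanishes on all nontrivial elements, as such a character is a multiple of $\chi_{\mathrm{reg}}$). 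Fixing representatives $s_1,\dots,s_5$ of the five conjugacy classes of symplectic reflections identifies $\msC\cong\bC^5$ via $c=(c_1,\dots,c_5)$, and equations~\eqref{e:hyp-eqns1}--\eqref{e:hyp-eqns2} become a linear system $M(T,m)\,c=0$. Its solution space is $\ker M(T,m)$, and $\ker M(T,m)\subseteq\ker\ell$ holds precisely when $\ell$ lies in the row space of $M(T,m)$; so the task is, for each $(T,m)$, to check that the row space of $M(T,m)$ contains one of the twenty-one distinguished covectors.

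The conceptual input that makes the system manageable is that $\chi_{\bC^4}=\chi_{Q_8}\boxtimes\chi_{D_8}$, where $\chi_{Q_8},\chi_{D_8}$ are the characters of the two-dimensional representations of $Q_8$ and $D_8$, is supported on the center: since $\chi_{Q_8}$ and $\chi_{D_8}$ each vanish off $\{\pm\Id\}$, the product $\chi_{\bC^4}(g,h)$ is nonzero only when $(g,h)$ is central. Hence $\chi_{\bC^4}(s_i)=0$, and because $s_js_i\neq\pm\Id$ for $i\neq j$ (as $s_j\notin\{s_i,-s_i\}$) also $\chi_{\bC^4}(s_js_i)=0$. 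Writing $L_\psi(c):=\sum_{i}\psi(s_i)c_i$ for each one-dimensional character $\psi$ -- these are exactly the normals of the type-(i) hyperplanes -- a short computation (folding each conjugacy pair $\{s_i,-s_i\}$ and using $\chi(-\Id)=|T|-4m$) rewrites the system in the clean form
\begin{equation*}
\sum_{\psi\in T}L_\psi(c)=0,\qquad \sum_{\psi\in T}\psi(s_j)\,L_\psi(c)=4m\,c_j\quad(j=1,\dots,5).
\end{equation*}

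A few cases are transparent and fix the two types. For $T=\{\psi\}$ and $m=0$ the system reduces to $L_\psi(c)=0$ with the remaining equations automatic, so the solution space is exactly the type-(i) hyperplane $\ker L_\psi$. For $T=\varnothing$ and $m\geq 1$ the equations read $c_j=0$ for all $j$, so the solution space is the origin, trivially inside every hyperplane; more generally the $4m\,c_j$ terms tend to force individual coordinates to vanish, producing type-(ii) hyperplanes. To avoid treating all $2^{16}$ subsets $T$ separately I would use the symmetry $\Out(G)\cong S_5$ of Proposition~\ref{p:outgs5}, which acts as the full symmetric group on the five reflection classes: it permutes the coordinates $c_i$, the sixteen covectors $L_\psi$ (the parity condition $\prod_i\psi(s_i)=1$ is preserved), and the five coordinate functionals $c\mapsto c_j$, hence permutes the twenty-one hyperplanes; and it acts on the data $(T,m)$ by permuting $T$ and fixing $m$. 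The entire verification is thus $S_5$-equivariant, and it suffices to treat one representative pair $(T,m)$ per orbit.

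The real obstacle is the combinatorial exhaustion that remains once the symmetry is used, concentrated in the cases where $T$ is large. There the functionals $\{L_\psi:\psi\in T\}$ satisfy many linear relations, so the six rows of the system no longer force the individual $L_\psi$ to vanish, and one must confirm that their span nevertheless contains one of the twenty-one distinguished covectors and never cuts out a twenty-second hyperplane or a subspace disjoint from all of them. Carrying out this bookkeeping over all orbits of subsets of the sixteen one-dimensional characters is exactly what we entrust to Magma: for each proper $(T,m)$ it assembles $M(T,m)$, computes the row space, and verifies that at least one distinguished covector belongs to it, completing the proof.
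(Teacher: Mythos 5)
Your proposal is correct and takes essentially the same route as the paper: the paper's own proof of Lemma \ref{lem:complemma} is nothing more than a Magma-assisted exhaustive check over the finitely many submodule characters, which is exactly what your argument reduces to. Your preprocessing is sound and genuinely useful --- the clean form $\sum_{\psi\in T}L_\psi(c)=0$, $\sum_{\psi\in T}\psi(s_j)L_\psi(c)=4m\,c_j$ follows correctly from the vanishing of $\chi_{\bC^4}$ off the center together with $\chi(-\Id)=|T|-4m$, and the $S_5$-equivariance via Proposition \ref{p:outgs5} legitimately cuts the case count --- but the essential content, a finite computer verification that the row space of each system contains one of the twenty-one distinguished covectors, is the same as the paper's.
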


\subsection{The singular locus of singular $e H_c(G) e$}
We can also deduce from the proof of Theorem \ref{t:main3} more information on
the singularities of the singular  $\Spec e H_c(G) e$:
\begin{corollary}\label{cor:twodimleaves}
  The number of two dimensional leaves in $\Spec e H_c(G)
  e$ equals the number of conjugacy classes of symplectic reflections
  $\{ s, - s\}$ such that $c(s) = 0$.
\end{corollary}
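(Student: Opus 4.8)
The plan is to extract the count directly from the proof of Theorem~\ref{t:main3}, combined with the general theory of symplectic leaves recalled in \S\ref{ss:symleaves} and Losev's Theorems~\ref{t:Losev1} and~\ref{t:Losev2}. Since $\Spec e H_c(G) e \cong \Spec Z(H_c(G))$ is a four-dimensional filtered Poisson deformation of $\bC[V^*]^G$, its two-dimensional leaves are exactly the codimension-two leaves, and by Losev's theorems every positive-codimension leaf is labeled by a conjugacy class $(P)$ of proper parabolic subgroups $P < G$, the dimension of the associated leaf being $\dim V^P$. So the first task is to identify which parabolics $P$ satisfy $\dim V^P = 2$, and then to decide, for each, whether the leaf labeled $(P)$ is genuinely present.

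First I would classify the parabolic subgroups $P$ with $\dim V^P = 2$. Since the eigenvalues of an element of $\Sp_4(\bC)$ occur in inverse pairs, every $p \in G$ has $\dim V^p \in \{0,2,4\}$, so every nonidentity element of such a $P$ fixes the two-plane $V^P$ pointwise and is therefore a symplectic reflection $s$ with $V^s = V^P$. Any two symplectic reflections with the same fixed plane act as $-\Id$ on $(V^P)^\perp$ (the only order-two symplectic automorphism of $\bC^2$ without nonzero fixed vectors) and as $\Id$ on $V^P$, hence coincide; thus $P = \langle s \rangle \cong \bZ/2$. The candidate two-dimensional leaves are therefore indexed by the subgroups $\langle s \rangle$, $s \in \cS$, up to conjugacy. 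Because $s$ and $-s$ are conjugate in $G$ (Proposition~\ref{p:Gfacts}.(i)), the subgroups $\langle s \rangle$ and $\langle -s \rangle$ are conjugate, so these conjugacy classes of parabolics are in bijection with the five conjugacy classes $\{s,-s\}$ of symplectic reflections.

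Next I would determine, for each $P = \langle s \rangle$, exactly when the leaf labeled $(P)$ is a genuine positive-codimension leaf rather than a part of the open leaf. By Losev's theorems this is governed by the local algebra $H_{c|_P}(P,(V^P)^\perp) = H_{c(s)}(\bZ/2,\bC^2)$: the leaf $(P)$ is present precisely when the origin is a zero-dimensional leaf of $\Spec Z(H_{c(s)}(\bZ/2,\bC^2))$. This central variety is the standard one-parameter deformation of the Kleinian singularity $\bC^2/(\bZ/2)$; it is a smooth affine surface (a single two-dimensional leaf, with no zero-dimensional leaf at the origin) when $c(s) \neq 0$, and is the singular cone $\bC^2/(\bZ/2)$, with an isolated zero-dimensional leaf at the origin, exactly when $c(s) = 0$. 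Hence the class $(\langle s \rangle)$ contributes a two-dimensional leaf of $\Spec e H_c(G) e$ if and only if $c(s) = 0$; this is precisely the $c(s) = 0$ computation already carried out in the proof of Theorem~\ref{t:main3}.(ii). Summing over the five classes gives the claim.

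I expect the main obstacle to be the bookkeeping in the labeling dictionary rather than any surface computation. Specifically, one must verify that the labeling of leaves by conjugacy classes of parabolics is injective, so that distinct classes $\{s,-s\}$ really yield distinct leaves and no two-dimensional leaf escapes the parabolic classification, and one must invoke the precise form of Losev's theorems that ties ``presence of the leaf $(P)$'' to ``the origin is a zero-dimensional leaf of the local central variety $\Spec Z(H_{c|_P}(P,(V^P)^\perp))$.'' Once that dictionary is in place, the surface-level analysis of $\bZ/2$ acting on $\bC^2$ is routine.
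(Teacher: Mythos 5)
Your proposal is correct and follows essentially the same route as the paper's own proof: classify the proper parabolic subgroups as $\langle s \rangle \cong \bZ/2$ for $s \in \cS$, identify their conjugacy classes with the classes $\{s,-s\}$, and then apply Losev's Theorem \ref{t:Losev1} together with the observation that $Z_{c|_P}(\bZ_2,\bC^2)$ has a zero-dimensional leaf (equivalently, a maximal Poisson ideal) precisely when $c(s)=0$. The only cosmetic differences are your alternative argument that two symplectic reflections sharing a fixed plane coincide (the paper instead notes that the product of two distinct symplectic reflections in $P$ could be neither $\Id$ nor a symplectic reflection), and your worry about injectivity of the labeling, which is not needed since the labeling is a map from leaves to conjugacy classes and Losev's bijection counts the leaves over each label.
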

\begin{proof}
  As noted in the proof of Theorem \ref{t:main3}, the proper parabolic
  subgroups of $G$ are all of the form $\langle s \rangle$ for some
  symplectic reflection $s$. Therefore there is a natural bijection
  $\{ s, -s \} \mapsto (\langle s \rangle)$ between the conjugacy
  classes of symplectic reflections in $G$ and conjugacy classes of
  proper parabolic subgroups of $G$. Now Losev's Theorem
  \ref{t:Losev1} says that there is a bijection between height two
  Poisson prime ideals labeled by a conjugacy class $( \langle s
  \rangle )$ and the $\Xi$-orbits of maximal Poisson ideals in
  $Z_{c|_P}(\bZ_2, \bC^2)$. If $c(s) = 0$ then there is a unique maximal
  Poisson ideal in $Z_{0}(\bZ_2, \bC^2)$, which corresponds to the isolated
  singularity of $\bC^2 / \bZ_2$. If $c(s) \neq 0$, then there are
  no maximal Poisson ideals in $Z_{c|_P}(\bZ_2, \bC^2)$. Therefore $c(s) = 0$
  implies that there is a unique two-dimensional leaf in $\Spec e
  H_c(G) e$ labeled by $(\langle s \rangle )$ and $c(s) \neq 0$
  implies that there are no two-dimensional leaves labeled by
  $(\langle s \rangle )$.
\end{proof}
We can also give partial information on the zero-dimensional
symplectic leaves of $\Spec eH_c(G) e$.  Recall that, for a Poisson
algebra $A$, the zeroth Poisson homology is defined as $\HP_0(A) := A
/ \{A, A\}$, where $\{A, A\}$ is considered as a vector subspace of
$A$.  The space of Poisson traces is the dual vector space,
$\HP_0(A)^* = \{\phi: A \to \bC \mid \phi(\{a,b\}) = 0, \forall a,b
\in A\}$.  Recall also that, for each zero-dimensional symplectic leaf
$\{x\} \subseteq \Spec A$, evaluation at $x$ is a Poisson trace,
and these are linearly independent for distinct zero-dimensional
leaves.  Hence, the number of zero-dimensional symplectic leaves is at
most $\dim \HP_0(A)^*$.
\begin{proposition}\label{p:hp0-cssra}
For all commutative spherical symplectic reflection algebras
$A = eH_c(G) e$ deforming $\bC[V^*]^G$, the following holds:
\begin{enumerate}
\item[(a)] $\dim \HP_0(A) = 10$;
\item[(b)] $\Spec A$ has at most ten zero-dimensional symplectic leaves.
\end{enumerate}
\end{proposition}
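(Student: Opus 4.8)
The plan is to deduce part (b) immediately from part (a) and to concentrate all the work on part (a). Indeed, the discussion preceding the proposition already records that the evaluation functionals at distinct zero-dimensional symplectic leaves are linearly independent elements of the space of Poisson traces $\HP_0(A)^*$; since $\HP_0(A)$ is finite-dimensional, $\dim \HP_0(A)^* = \dim \HP_0(A)$, so once (a) gives $\dim \HP_0(A) = 10$, the number of zero-dimensional leaves is at most ten, which is (b).

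For part (a), I would use the Satake isomorphism $A = eH_c(G)e \cong Z(H_c(G))$ to view $A$ as a filtered Poisson algebra whose bracket has degree $-2$ and whose associated graded Poisson algebra is $\gr A = \bC[V^*]^G$. The strategy has two halves: first bound $\dim \HP_0(A)$ by a single number independent of $c$, and then pin that number down to ten. For the bound I would pass to the associated graded: writing $\{A,A\}$ for the $\bC$-span of all brackets, one has $\gr\{A,A\} \supseteq \{\gr A, \gr A\}$, so $\gr \HP_0(A) = \gr A / \gr\{A,A\}$ is a quotient of $\HP_0(\gr A) = \HP_0(\bC[V^*]^G)$. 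This yields the upper semicontinuity estimate
\[
\dim \HP_0(eH_c(G)e) \le \dim \HP_0(\bC[V^*]^G) \quad \text{for all } c,
\]
so it suffices to compute the right-hand side and to show the bound is attained for every $c$.

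To evaluate $\dim \HP_0(\bC[V^*]^G)$, I would apply the finiteness and computational machinery for zeroth Poisson homology of linear symplectic quotients developed in \cite{hp0bounds}: this reduces the computation to the fixed-point strata $V^K$ of $G$ and can be carried out explicitly (consistent with the paper's use of \cite{magma}), yielding the value ten. As an independent consistency check, for generic $c$ the algebra $A$ is smooth by Theorem \ref{t:main2}, so $\Spec A$ is a smooth affine symplectic fourfold; by the Brylinski isomorphism between Poisson and de Rham cohomology for smooth symplectic varieties, $\HP_0(A) \cong H^4_{\mathrm{dR}}(\Spec A)$, and since all of these smoothings are diffeomorphic to the symplectic resolution $\widetilde{\bC^4/G}$ of Theorem \ref{t:main1}, this top cohomology equals the number of complex two-dimensional irreducible components of the central fiber---again ten. (I note that this $t=0$ value need not coincide with the Alev--Farinati--Lambre--Solotar count of conjugacy classes $[g]$ with $V^g = 0$, which governs the $t=1$ Weyl-algebra case.)

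The main obstacle is to promote the generic equality to an equality for \emph{every} $c$, including the singular parameters of Theorem \ref{t:main3}, where $\Spec A$ is singular and the Brylinski description no longer applies directly. For this I would show that the relative zeroth Poisson homology of the universal family $\{eH_c(G)e\}_{c \in \msC}$ is a flat (locally free) module over $\bC[\msC]$; flatness forces the fiber dimension $\dim \HP_0(eH_c(G)e)$ to be constant in $c$, hence equal to its generic value ten everywhere, completing (a). Establishing this flatness---equivalently, ruling out any jump in $\dim \HP_0$ over the discriminant of twenty-one hyperplanes---is the crux of the argument; the upper bound above already gives $\dim \HP_0 \le 10$, so the real content is the matching lower bound at the singular parameters.
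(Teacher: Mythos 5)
Your proposal has the same skeleton as the paper's proof---part (b) from part (a) via Poisson traces, the associated-graded bound $\dim \HP_0(eH_c(G)e) \le \dim \HP_0(\bC[V^*]^G) = 10$ (this is precisely \cite[Remark 2.13]{hp0bounds} plus the Magma computation), and the Brylinski isomorphism $\HP_0(A) \cong H^4(\Spec A)$ at smooth parameters---but it does not close, and the step you leave open is not actually the hard part. You declare the ``crux'' to be the lower bound $\dim \HP_0(eH_c(G)e) \ge 10$ at singular parameters, to be obtained by proving flatness of the relative $\HP_0$ of the universal family. No such flatness input is needed: the lower bound is an instance of upper semicontinuity of fiber dimension, and it follows from the very $\gr$ argument you already made, run over the parameter space instead of fiber by fiber. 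Concretely, for the universal family $A_{\msC} = eH_{\msC}(G)e$ over $\bC[\msC]$, the module $\HP_0(A_{\msC}) = A_{\msC}/\{A_{\msC},A_{\msC}\}$ is finitely generated over $\bC[\msC]$ (its associated graded is a quotient of $\HP_0(\bC[V^*]^G)\otimes\bC[\msC]$, so ten generators suffice); since $\HP_0$ is a cokernel it commutes with base change, $\HP_0(A_c) \cong \HP_0(A_{\msC})\otimes_{\bC[\msC]}\bC_c$; and for a finitely generated module over the domain $\bC[\msC]$, the fiber dimension at any closed point is at least the generic rank (Nakayama). So $\dim \HP_0(A_c)$ can only jump \emph{up} at special $c$, never drop; combined with the uniform bound $\le 10$ and the generic value $10$, it equals $10$ everywhere, and flatness comes out as a consequence rather than going in as a hypothesis. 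This is exactly the paper's one-line argument (``since the dimension is upper-semicontinuous, it suffices to show that it is ten-dimensional for generic $c$''); your worry that the dimension might drop over the discriminant has the direction of semicontinuity backwards.

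A secondary gap: the generic-parameter value $10$, which your argument does rely on (it is not merely a ``consistency check,'' since it is the lower bound being propagated), is not justified as you state it. You assert that the smooth $\Spec eH_c(G)e$ is diffeomorphic to the symplectic resolution and that the central fiber has ten two-dimensional irreducible components; neither claim is established in the paper or in your proposal, so ``again ten'' is circular. The paper instead invokes \cite[Theorem 1.8.(ii)]{EGsra}, which computes the cohomology of the smooth $\Spec eH_c(G)e$ in terms of the group $G$, and combines it with the Brylinski isomorphism $\HP_*(A)\cong H^{\dim \Spec A - *}(\Spec A)$; you should cite that result (or supply an actual computation of $H^4$) rather than appeal to the unverified geometry of the resolution.
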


Note that part (a) confirms \cite[Conjecture 1.3.5.(i)]{ESweyl} on
symplectic resolutions in this case, which states that, whenever $V^*/G$
admits a symplectic resolution for $G < \Sp(V)$, then $\dim
\HP_0(\bC[V^*]^G)$ equals the number of conjugacy classes of elements $g
\in G$ such that $g-\Id$ is invertible.

The result is perhaps surprising in that there is a very large number
of proper subrepresentations of the regular representation of $G$
($5\cdot 2^{16}-1=327679$), and these can all be extended to
representations of $H_c(G)$ at special values of $c$ depending on the
representation. Thus, in principle, at special values of $c$ many of
these could appear and be supported on many distinct zero-dimensional
symplectic leaves. However, we see above that there are nonetheless at
most ten zero-dimensional symplectic leaves at each value of
$c$. (Note that, for example, at $c=0$, all representations of $G$
occur, but there is only one zero-dimensional symplectic leaf.)

Note also that this result does not rely on the brute-force
computation underlying Lemma \ref{lem:complemma} (although it does
rely on a different computer computation, namely computing
$\HP_0(\bC[V^*]^G)$ up to a certain polynomial degree provided by
\cite{hp0bounds}).
\begin{proof}
  Using the methods of \cite[\S 4]{hp0bounds} and Magma code there, we
  computed that $\HP_0(\bC[V^*]^G)$ is ten-dimensional. Then, according to
  \cite[Remark 2.13]{hp0bounds}, ten is also an upper bound for $\dim
  \HP_0(A)$ for all $A = e H_c(G) e$. To show that $\dim
  \HP_0(A)$ is exactly ten for all such $A$, since the dimension is
  upper-semicontinuous, it suffices to show that it is ten-dimensional
  for generic $c$.  Since $\Spec A$ is generically a symplectic
  manifold, this is a consequence of \cite[Theorem 1.8.(ii)]{EGsra}
  together with the isomorphism $\HP_0(A) \cong H^{\dim \Spec A}(\Spec
  A)$ (as explained in \cite[Theorem 22.2.1]{Br}, more generally,
  $\HP_*(A) \cong H^{\dim \Spec A - *}(\Spec A)$ for symplectic $\Spec
  A$).
\end{proof}
To summarize, if $c$ does not lie on any of the twenty-one hyperplanes
of Theorem \ref{t:main3} then $\Spec eH_c(G)e$ is a smooth symplectic
manifold. If $c$ is a generic point of one of the sixteen hyperplanes
such that $c(s) \neq 0$ for all symplectic reflections $s$, then the
singular locus of $\Spec eH_c(G)e$ consists of a single point,
corresponding to a one-dimensional representation of $H_c(G)$ (with
trivial action of $V^*$). If $c$ lies on at least one of these sixteen
hyperplanes but does not lie on any of the five hyperplanes
$c(s)=c(-s)=0$ for $s$ a symplectic reflection, then the singular
locus is zero-dimensional and consists of at most ten points.  On the
other hand, if $c(s) = 0$ for some $s$ then in addition to the smooth
locus, there are also two-dimensional and zero-dimensional leaves,
with the number of two-dimensional leaves given by the number of
hyperplanes of the form $c(s)=0$ on which $c$ lies (this is,
obviously, at most five, with equality if and only if $e H_c(G) e =
\bC[V^*]^G$ itself), and again with at most ten zero-dimensional
leaves. A generic point on one of the five hyperplanes of the form
$c(s)=0$ has exactly this corresponding two-dimensional leaf, and no
other leaves aside from the open leaf.

We remark that we do not know how to compute precisely how many
zero-dimensional symplectic leaves there are, nor even if the maximum
of ten is attained for any $c$.  To do this seems like an interesting
problem (although it may be difficult, as it is analogous to
determining the number, if any, of finite-dimensional representations
admitted by a given quantization of $\bC[V^*]^G$).

\subsection{Condition (ii) of Proposition  \ref{p:cssra-smooth}} \label{ss:regrep} The following observation
was used in the proof of Proposition \ref{p:cssra-smooth}.(ii),
which we needed in the proof of Theorem \ref{t:main3} (in order to
make the computation reported in Lemma \ref{lem:complemma} tractable):
\begin{lemma}\label{lem:regrep}
  Let $\chi : Z( H_c(G)) \rightarrow \bC$ be a closed point of $\Spec
  Z(H_c(G))$. Then there exists a finite-dimensional $H_c(G)$-module
  $M$ such that $M$ is isomorphic to the regular representation as a
  $G$-module and $z \cdot m = \chi(z) m$ for all $m \in M, z \in
  Z(H_c(G))$.
\end{lemma}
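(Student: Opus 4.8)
The plan is to prove existence by a \emph{specialization} (flat-limit) argument rather than by exhibiting an explicit module. The naive candidate $H_c(G)e \otimes_{Z} \bC_\chi$, where $Z := Z(H_c(G))$ and $\bC_\chi = Z/\ker\chi$, does not work in general: its fiber can jump in dimension and fail to be the regular representation at singular points $\chi$ (for instance, at $c=0$ and $\chi$ the origin of $V^*/G$, this fiber is the coinvariant algebra $\bC[V^*]/(\bC[V^*]^G_+)$, which is $\bC[G]$ only when $G$ is a reflection group). This is the main obstacle, and it is precisely what forces us to pass through the good (Azumaya) locus and take a limit. The key structural inputs I would use, all available above, are: (a) $H_c(G)$ is module-finite over its center $Z$ (so the coherent sheaf $\cF := H_c(G)e$ on $\Spec Z$ is finitely generated over $Z$); (b) the smooth locus of $\Spec Z$ equals the Azumaya locus and is dense (as recalled in the proof of Proposition \ref{p:cssra-smooth}); and (c) by \cite[Theorem 1.7]{EGsra}, at each Azumaya point $\eta$ the fiber of $\cF$ is $H_c(G)e/\ker(\eta)e \cong \bC[G]$, the regular representation.

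Concretely, first I would note that $\Spec Z$ is irreducible (it is a filtered deformation of the irreducible variety $V^*/G$), so the dense open Azumaya locus $U$ meets every neighborhood of $\chi$, and one can find an irreducible curve through $\chi$ meeting $U$. Normalizing this curve and localizing at a point lying over $\chi$ produces a discrete valuation ring $R$ with residue field $\bC$ and fraction field $K$, together with a ring map $Z \to R$ whose closed point maps to $\chi$ and whose generic point maps into $U$. I would then form the $R$-module with compatible left $H_c(G)$-action $\mathcal M := \cF \otimes_{Z} R$; since the $R$-action factors through the central $Z$, it commutes with the $H_c(G)$-action. The $R$-torsion submodule of $\mathcal M$ is therefore an $H_c(G)$-submodule, and I would set $\mathcal M' := \mathcal M/(R\text{-torsion})$, a finitely generated torsion-free, hence free, $R$-module carrying an $H_c(G)$-action. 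Inverting $R$ kills the torsion, so $\mathcal M' \otimes_R K = \cF \otimes_Z K$ is the fiber of $\cF$ at the generic point of the curve, a point of $U$; by (c) this is the regular representation over $K$, of rank $|G|$.

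Finally I would define $M := \mathcal M' \otimes_R \bC$, where $\bC = R/\mathfrak{m}_R$. By construction $M$ is a finite-dimensional $H_c(G)$-module on which each $z \in Z$ acts through $Z \to R \to R/\mathfrak{m}_R$, i.e.\ by the scalar $\chi(z)$, as required. To identify its $G$-structure I would use that the $G$-character is constant in this free family: for each irreducible $G$-representation $\tau$ the isotypic component $e_\tau \mathcal M'$, cut out by the central idempotent $e_\tau \in \bC[G] \subset R[G]$, is a direct summand of the free $R$-module $\mathcal M'$, hence itself free, of rank equal to $\dim_K(e_\tau \mathcal M' \otimes_R K) = (\dim\tau)^2$ since the generic fiber is $\bC[G]$. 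Specializing, $e_\tau M$ has dimension $(\dim\tau)^2$, so $\tau$ occurs in $M$ with multiplicity $\dim\tau$; summing over $\tau$ gives $M \cong \bC[G]$ as a $G$-module. The only steps requiring care are verifying that the torsion is an $H_c(G)$-submodule and that the residue field of $R$ is exactly $\bC$ (automatic from normalizing a curve over the algebraically closed field $\bC$), both of which are routine; the conceptual crux is the passage through the Azumaya locus, which is what guarantees the limiting module has the correct rank $|G|$ and $G$-character rather than the jumped-up fiber of the naive construction.
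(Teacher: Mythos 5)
Your proposal is correct, but it follows a genuinely different route from the paper's proof. The paper lifts $\chi$ through the Etingof--Ginzburg isomorphism $\Rep^{\circ} /\!/ \mathsf{Aut}_G(\bC G) \iso \Spec Z(H_c(G))$ of \cite[Theorem 3.7]{EGsra}: a representative of the closed orbit in the fiber over $\chi$ is, by the very definition of $\Rep_{\bC G}(H_c(G))$, a module structure on $\bC G$ itself, so the $G$-structure is the regular representation for free, and the only remaining work is to show that $Z(H_c(G))$ acts by the scalars $\chi(z)$ on all of $M$ rather than merely on the line $eM$; the paper does this by a Zariski-closure argument from the dense locus of irreducible representations, where the claim is Schur's lemma. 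You never touch the representation variety: you work with the coherent sheaf $\cF = H_c(G)e$ on $\Spec Z(H_c(G))$, use density of the Azumaya ($=$ smooth) locus together with \cite[Theorem 1.7]{EGsra}, and take a flat limit along a curve via a DVR, recovering constancy of the $G$-structure by hand through the isotypic projectors $e_\tau$. What each buys: your inputs (Theorem 1.7 of \cite{EGsra}, the Azumaya-equals-smooth statement, standard algebraic geometry) are exactly those already used in Proposition \ref{p:cssra-smooth}, so your argument is more self-contained relative to this paper and generalizes to any algebra finite over its center whose Azumaya locus is dense with regular-representation fibers; the paper's argument is shorter but leans on the heavier Theorem 3.7, and its torsion-free/limit bookkeeping is hidden inside the GIT quotient. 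One step of yours needs an explicit sentence: \cite[Theorem 1.7]{EGsra} describes fibers of $\cF$ at \emph{closed} points of the Azumaya locus $U$, whereas you invoke it at the non-closed generic point of your curve. This is repaired routinely: since $U$ is reduced and the closed-point fiber dimension of $\cF$ is constantly $|G|$ on $U$, the sheaf $\cF|_U$ is locally free of rank $|G|$, and each $e_\tau \cF|_U$ is a locally free direct summand of rank $(\dim \tau)^2$; hence the fiber at \emph{every} point of $U$, closed or not, has the isotypic dimensions you claim, and the rest of your specialization argument goes through.
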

\begin{proof}
  Let $\Rep_{\bC G}(H_c(G))$ denote the variety of
  homomorphisms $\phi : H_c(G) \rightarrow \mathrm{End}_{\bC}(\bC G)$,
  whose restriction to $G$ is the $G$-action of left
  multiplication. The group $\mathsf{Aut}_G(\bC G)$ acts on
  $\Rep_{\bC G}(H_c(G))$ by base change. It is shown in
  \cite[Theorem 3.7]{EGsra} that there exists an irreducible component
  $\Rep^{\circ}$ of $\Rep_{\bC G}(H_c(G))$ such that
  the map $\pi : \Rep_{\bC G}(H_c(G)) \rightarrow \Spec
  Z(H_c(G))$, sending a representation $M$ to the algebra homomorphism
  $Z(H_c(G)) \rightarrow \bC$ given by the action of $Z(H_c(G))$ on
  the line $e M \subseteq M$ (here $e = \frac{1}{|G|} \sum_{g \in G} g
  \in \bC[G]$ is the symmetrizer element), restricts to an isomorphism
  of varieties $\Rep^{\circ} // \mathsf{Aut}_G(\bC G)
  \stackrel{\sim}{\longrightarrow} \Spec Z(H_c(G))$. The irreducible
  component $\Rep^{\circ}$ is characterized as the closure in
  $\Rep_{\bC G}(H_c(G))$ of the set
  $\Rep^{\circ}_{\mathrm{reg}}$ of points in
  $\Rep_{\bC G}(H_c(G))$ that are irreducible
  $H_c(G)$-modules. Fix a representative $(M,\phi_M)$ in the unique
  closed $\mathsf{Aut}_G(\bC G)$-orbit in $(\pi
  |_{\Rep^{\circ}})^{-1}(\chi)$ and write $M = M_1 \oplus
  \cdots \oplus M_k$ for the decomposition of $M$ into irreducible
  $H_c(G)$-modules. Without loss of generality, suppose
  $e M \subseteq M_1$. Then $z \cdot m =
  \chi(z) m$ for all $m \in M_1$. We need to show that $z \cdot m =
  \chi(z) m$ for all $m \in M$. Fix $z \in Z(H_c(G))$ and consider the
  closed subvariety
$$
Y_z = \left\{ \phi(z) - \pi(\phi)(z) \Id_{\bC [G]} = 0 \ | \ \phi \in \Rep_{\bC G}(H_c(G)) \right\}
$$
of $\Rep_{\bC G}(H_c(G))$. Then $Y_z \cap
\Rep^{\circ}$ is closed in $\Rep^{\circ}$. On the
other hand, $\Rep^{\circ}_{\mathrm{reg}} \subseteq Y_z \cap
\Rep^{\circ}$ which implies that $\Rep^{\circ} \subseteq
Y_z$.  Since $(M,\phi_M) \in \Rep^\circ$, this implies that $\phi_M(z)
= \chi(z) \cdot \Id_M$, as desired.
\end{proof}


\appendix

\section{Summary of \cite{Los-csra}}

We summarize those results of \cite{Los-csra} that have been used in
this article (note that we only needed them for the proof of Theorem
\ref{t:main3}, not for the proof of Theorem \ref{t:main2} and hence
also Theorem \ref{t:main1}). Recall that a parabolic subgroup of $G$
is defined to be any subgroup that is the stabilizer of some vector $v
\in V$. Let $P$ be a parabolic subgroup of $G$. By definition, it is
normal in its normalizer $N_G(P)$. Let $\Xi := N_G(P)/P$ be the
quotient. The algebra $H_c(G)$ has a canonical filtration given by
placing $G$ in degree zero and $V$ in degree one. Then $Z_c(G)$
inherits this filtration by restriction and $\gr (Z_c(G)) \simeq
\bC[V^*]^G$. If $\mf{p} \subset Z_c(G)$ is the prime ideal defining the
closure of a symplectic leaf of $Z_c(G)$, then it is known, by
\cite[Theorem 2.8]{MartinoAssociated}, that $\gr (\mf{p})$ is a prime
ideal defining the closure of a symplectic leaf of $V^*/G$. Since the
leaves of $V^* / G$ are in bijection with conjugacy classes of
parabolic subgroups of $G$ (as recalled in \S \ref{ss:symleaves}),
the leaves in $Z_c(G)$ can also be labeled by conjugacy classes of
parabolic subgroups of $G$ (however, the same conjugacy class could
label several different leaves). Let $\mathsf{PSpec}_{(P)} Z_c(G)$
denote the set of all leaves in $Z_c(G)$ that are labeled by $(P)$,
considered as a subset of $\Spec Z_c(G)$. We fix a representative $P$
in each conjugacy class $(P)$. There is a unique
zero-dimensional leaf $\{ 0 \}$ in $V^*/G$; it is labeled by
$(G)$. 

Next, we consider the algebra $H_{c|_P}(P,(V^P)^\perp)$, the
symplectic reflection algebra defined by the subgroup $P$, the
restriction $c|_P$, and the subspace $(V^P)^\perp \subseteq V$. The
group $\Xi$ acts on $\Spec Z_{c|_P}(P,(V^P)^\perp)$. Let
$\mathsf{PSpec}_{(P)}^{\Xi} Z_{c|_P}(P,(V^P)^\perp)$ denote the set of
$\Xi$-orbits of zero-dimensional leaves in $\Spec
Z_{c|_P}(P,(V^P)^\perp)$. By \cite[Theorem 1.3.2 (4)]{Los-csra}:

\begin{theorem}\label{t:Losev1}
There exists a bijection $\mathsf{PSpec}_{(P)}^{\Xi} Z_{c|_P}(P,(V^P)^\perp)  \stackrel{1 : 1}{\longleftrightarrow} \mathsf{PSpec}_{(P)} Z_c(G)$.  
\end{theorem}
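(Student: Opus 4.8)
The plan is to reconstruct the argument of \cite{Los-csra}, whose essential input is a \emph{completion theorem} of Bezrukavnikov--Etingof type, adapted to the commutative ($t=0$) setting. Recall that here $Z_c(G) = Z(H_c(G)) \cong eH_c(G)e$ carries its Poisson structure, and that symplectic leaves are cut out Poisson-locally. Fix once and for all a point $b \in V^*$ with $\Stab_G(b) = P$, generic with this property, so that its image $\bar b \in \Spec Z_c(G)$ lies on a leaf labeled by $(P)$. The decomposition $V = V^P \oplus (V^P)^\perp$ is $P$-stable and symplectic, with $P$ acting trivially on $V^P$ and acting through its symplectic reflections on $(V^P)^\perp$. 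The first and central step is to produce a Poisson isomorphism of formal neighborhoods
\[
Z_c(G)^{\wedge_{\bar b}} \;\cong\; \bC[V^P]^{\wedge_0} \;\widehat{\otimes}\; Z_{c|_P}(P,(V^P)^\perp)^{\wedge_{\bar 0}}.
\]
On the level of $H_c(G)$ this carries an additional matrix factor $\mathrm{Mat}_{[G:P]}$ coming from the induction $G \times_P (-)$, but that factor disappears upon passing to centers, leaving the clean product above: a smooth symplectic formal polydisc of dimension $\dim V^P$ times the transverse central variety $\Spec Z_{c|_P}(P,(V^P)^\perp)$ completed at the image $\bar 0$ of the origin.

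Since symplectic leaves are determined by the local Poisson bracket, under this isomorphism the leaves of $\Spec Z_c(G)$ through $\bar b$ are exactly the products of the entire left-hand smooth factor with the leaves of the transverse slice passing through $\bar 0$. Computing stabilizers of generic points, such a product leaf has global label equal to the stabilizer in $P$ of a generic point of the transverse leaf: the zero-dimensional transverse leaf $\{\bar 0\}$ is fixed by all of $P$ and hence gives a global leaf labeled by $(P)$, whereas positive-dimensional transverse leaves are labeled by proper parabolics $Q \subsetneq P$ and yield higher-dimensional global leaves labeled by $(Q)$. Thus, for the fixed base point $b$, the leaves labeled $(P)$ meeting the neighborhood of $\bar b$ are in natural bijection with the zero-dimensional leaves of $\Spec Z_{c|_P}(P,(V^P)^\perp)$.

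It remains to globalize and to recognize which transverse leaves glue to a single leaf of $\Spec Z_c(G)$. The points of $V^*$ whose stabilizer is conjugate to $P$ and which sit over a given $(P)$-stratum form one $G$-orbit of $P$-fixed loci; after fixing the representative $P$ the residual ambiguity is precisely the action of $\Xi = N_G(P)/P$, which normalizes $P$, preserves both $V^P$ and $(V^P)^\perp$, and therefore acts on $\Spec Z_{c|_P}(P,(V^P)^\perp)$, permuting its zero-dimensional leaves. Two such transverse leaves map to the same leaf of $\Spec Z_c(G)$ exactly when they are $\Xi$-conjugate. Assembling the local correspondence of the previous paragraph $\Xi$-equivariantly and passing to orbits then yields the asserted bijection $\mathsf{PSpec}_{(P)}^{\Xi} Z_{c|_P}(P,(V^P)^\perp) \longleftrightarrow \mathsf{PSpec}_{(P)} Z_c(G)$.

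The hard part will be the completion isomorphism of the first step. One must produce it as an isomorphism of \emph{Poisson} algebras (not merely of commutative rings), compatibly with the canonical filtrations so that it degenerates at the associated graded level to the classical product decomposition of $V^*/G$ transverse to the $(P)$-stratum recalled in \S\ref{ss:symleaves} (cf.\ \cite{MartinoAssociated}); and one must check its $\Xi$-equivariance, without which the gluing in the final paragraph would be unjustified. This is exactly the substance of \cite[Theorem 1.3.2]{Los-csra}, which we invoke; granting it, the dimension bookkeeping and the passage to $\Xi$-orbits are then formal.
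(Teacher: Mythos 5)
This statement is never proved in the paper: it sits in the appendix explicitly titled ``Summary of \cite{Los-csra}'' and is simply quoted as \cite[Theorem 1.3.2 (4)]{Los-csra}, with no argument given. Your proposal --- although its outline of Losev's strategy (the $t=0$ Bezrukavnikov--Etingof-type formal-neighborhood decomposition, the disappearance of the matrix factor upon passing to centers, the local matching of $(P)$-labeled leaves with zero-dimensional leaves of the slice, and the $\Xi$-orbit gluing) is an accurate picture of how the cited proof works --- ultimately invokes that same Theorem 1.3.2 of \cite{Los-csra} for its essential step, so it is in substance the same citation-based approach as the paper; the only caution is that to avoid circularity you should attribute the completion isomorphism to the earlier completion results of \cite{Los-csra} (not to part (4) itself, which \emph{is} the bijection being proved).
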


Now fix a leaf $\mc{L}$ in $Z_c(G)$. It is labeled by some conjugacy
class of parabolics, $(P)$ say. For closed points $p \in \Spec
Z_{c}(G)$ and $q \in \Spec Z_{c|_P}(P,(V^P)^\perp)$ corresponding to
maximal ideals $\mf{m}_p \subseteq Z_c(G)$ and $\mf{n}_q \subseteq
Z_{c|_P}(P,(V^P)^\perp)$, denote by $H_c(G)_p$ and
$H_{c|_P}(P,(V^P)^\perp)_q$ the finite dimensional quotients of
$H_c(G)$ and $H_{c|_P}(P,(V^P)^\perp)$ by the ideals generated by
$\mf{m}_p \subseteq Z_c(G)$ and $\mf{n}_q$, respectively. Then
\cite[Theorem 1.4.1]{Los-csra} says:

\begin{theorem}\label{t:Losev2}
  Let $p \in \mc{L}$. Then there exists a zero-dimensional leaf $\{ q
  \}$ in $\Spec Z_{c|_P}(P,(V^P)^\perp)$ and an
  isomorphism of finite-dimensional algebras
$$
\theta : H_c(G)_p \stackrel{\sim}{\longrightarrow}
\mathrm{Mat}_{|G/P|}(H_{c|_P}(P,(V^P)^\perp)_q)
$$
such that the corresponding equivalence of categories $\theta^* : H_{c|_P}(P,(V^P)^\perp)_q - \mathrm{mod} \stackrel{\sim}{\rightarrow} H_c(G)_p - \mathrm{mod}$ satisfies
$$
\theta^*(M) = \Ind_P^G M
$$
as $G$-modules. 
\end{theorem}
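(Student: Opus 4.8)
The plan is to follow the method of formal completion along symplectic leaves, i.e.\ Losev's adaptation \cite{Los-csra} of the Bezrukavnikov--Etingof completion theorem for rational Cherednik algebras to the setting of symplectic reflection algebras at $t=0$. The geometric input, compatible with Theorem \ref{t:Losev1}, is that a closed point $p$ of the leaf $\mc{L}$ labeled by $(P)$ corresponds, via the associated graded identification $\gr Z_c(G) \iso \bC[V^*]^G$, to the $G$-orbit of a point $b \in V^*$ with stabilizer exactly $P$; here $b$ is a generic point of $V^P$ and $\dim \mc{L} = \dim V^P$. Since $P$ fixes $V^P$ pointwise and $(V^P)^\perp$ is its symplectic complement, the decomposition $V = V^P \oplus (V^P)^\perp$ is $P$-equivariant and symplectic, and at $t=0$ one has a factorization $H_{c|_P}(P,V) \cong H_{c|_P}(P,(V^P)^\perp) \ot \Sym(V^P)$, the second factor being commutative because $P$ acts trivially there.

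The technical heart is the completion isomorphism. First I would complete $H_c(G)$ at the maximal ideal $\mf{m}_p \subset Z_c(G)$ --- equivalently, along the orbit $G\cdot b$ --- and construct a $G$-equivariant isomorphism
\[
\widehat{H_c(G)}_{p} \;\iso\; \mathrm{Mat}_{|G/P|}\bigl(\widehat{H_{c|_P}(P, V)}_{b}\bigr),
\]
in which the rows and columns of the matrix algebra are indexed by the cosets $G/P$, i.e.\ by the points of $G\cdot b$, and the embedded group $\bC[G] \subset H_c(G)$ acts by permuting these factors according to its action on $G/P$ while acting through $P$ within each factor. This map is produced by the centralizer/idempotent construction: using the PBW theorem for $H_c(G)$, one identifies the completed algebra with matrix-valued functions on the formal neighborhood of the orbit and checks that the defining commutation relations of $H_c(G)$ match those of the small algebra near each point of $G\cdot b$.

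Granting this, the finite-dimensional statement follows by taking fibers. Passing to the quotient by $\mf{m}_p$ kills the $\Sym(V^P)$-directions, which parametrize motion along $\mc{L}$, and simultaneously cuts $\widehat{H_{c|_P}(P,(V^P)^\perp)}$ down to its finite-dimensional quotient $H_{c|_P}(P,(V^P)^\perp)_q$, where $q$ is the zero-dimensional leaf attached to $p$ by Theorem \ref{t:Losev1}; the matrix factor $\mathrm{Mat}_{|G/P|}$ is untouched. This yields the desired isomorphism $\theta: H_c(G)_p \iso \mathrm{Mat}_{|G/P|}(H_{c|_P}(P,(V^P)^\perp)_q)$. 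The associated Morita equivalence $\theta^*$ sends a module $M$ over the small algebra to $\bC[G/P] \ot M$ as a vector space; feeding in the $G$-equivariance recorded above --- the permutation of the $|G/P|$ factors together with the $P$-action inside each --- identifies this space with $\bC[G] \ot_{\bC[P]} M = \Ind_P^G M$ as a $G$-module.

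The main obstacle is establishing the completion isomorphism together with its $G$-equivariance in precisely the form needed for the last step. The bare matrix-algebra isomorphism is the standard output of the Bezrukavnikov--Etingof argument, but to read off $\theta^*(M) = \Ind_P^G M$ one must control exactly how the embedded copy of $\bC[G]$ is distributed across the $|G/P|$ matrix blocks (as the permutation action on the orbit $G\cdot b$), since it is this bookkeeping, rather than the abstract Morita equivalence, that upgrades the statement to the equivariant form asserted.
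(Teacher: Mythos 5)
The paper itself gives no proof of this statement: Theorem \ref{t:Losev2} is quoted directly from \cite[Theorem 1.4.1]{Los-csra}, and the only part the authors address at all is the $G$-module refinement, which the remark following the theorem attributes to the construction of the isomorphism in \cite[Theorem 2.5.3]{Los-csra} together with \cite[Corollary 5.4]{BellamyCupidal}. Your proposal instead sketches the underlying argument --- the Bezrukavnikov--Etingof completion method as adapted by Losev to $t=0$ --- and the outline is the right one: the factorization $H_{c|_P}(P,V) \cong H_{c|_P}(P,(V^P)^\perp) \ot \Sym(V^P)$ is correct (elements of $V^P$ are central since $\omega$ and every $\omega_s$, $s \in P$, vanish on $V^P \times (V^P)^\perp$), the passage from a completion isomorphism to the finite-dimensional quotients is sound, and $\bC[G/P] \ot M \cong \Ind_P^G M$ is the correct bookkeeping for the Morita equivalence.

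However, as a proof the proposal has a genuine gap, and it sits exactly where you flag it: the completion isomorphism and its precise $G$-equivariance are described, not constructed, and they are essentially the entire content of the theorem rather than a technical preliminary one may ``grant''. Two concrete points. First, your identification ``complete at $\mf{m}_p \subset Z_c(G)$ --- equivalently, along the orbit $G\cdot b$'' conflates two different spaces: $p$ lies in $\Spec Z_c(G)$, a filtered \emph{deformation} of $V^*/G$, while $b$ lies in $V^*$; points of the deformed variety do not correspond to points of the special fiber, and the labeling of leaves by parabolic conjugacy classes goes through associated graded Poisson ideals (via \cite[Theorem 2.8]{MartinoAssociated}), not through points. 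Making the PBW/matching-of-relations construction work at a maximal ideal of the deformed center at $t=0$ is a substantive part of what Losev actually proves. Second, the equivariant form of the matrix isomorphism --- how the embedded copy of $\bC[G]$ is distributed over the $|G/P|$ blocks --- is precisely the part that the paper's own remark notes is not even explicit in \cite{Los-csra} and requires \cite[Corollary 5.4]{BellamyCupidal}; you correctly identify this as the ``main obstacle'' but do not resolve it. In sum, what you have is a correct roadmap that reduces the statement to Losev's completion theorem, i.e., in effect a reduction to the very citation the paper makes, not an independent proof.
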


\begin{remark}
  The second part of Theorem \ref{t:Losev2} regarding $G$-module
  structures is not explicitly stated in \cite[Theorem
  1.4.1]{Los-csra}. However, it follows from the definition of the
  isomorphism of \cite[Theorem 2.5.3]{Los-csra} and \cite[Corollary
  5.4]{BellamyCupidal}.
\end{remark}

\bibliographystyle{amsalpha}
\bibliography{master}
\end{document}